\definecolor{grey}{rgb}{0.7,0.7,0.7}
\newtheorem{theorem}{Theorem}[section]
\newtheorem{lemma}[theorem]{Lemma}
\newtheorem{corollary}[theorem]{Corollary}
\newtheorem{proposition}[theorem]{Proposition}
\begin{document}

\newcommand\msn{M_n(\bar{\mathbb{R}})}
\newcommand\mstwo{M_2(\bar{\mathbb{R}})}
\newcommand\barr{\bar{\mathbb{R}}}
\newcommand\hatr{\hat{\mathbb{R}}}
\newcommand\leqr{\leq_{\mathcal{R}}}
\newcommand\leql{\leq_{\mathcal{L}}}
\newcommand\leqj{\leq_{\mathcal{J}}}
\newcommand\twomat[4]{\left( \begin{array}{c c} #1 & #2  \\ #3 & #4 \\ \end{array} \right)}
\newcommand\ol[1]{\overline{#1}}
\newcommand\prjr[1]{PR(#1)}
\newcommand\prjc[1]{PC(#1)}

\begin{center}
{ \huge{Multiplicative structure of $2 \times 2$ tropical matrices}}

\bigskip

MARIANNE JOHNSON\footnote{Email \texttt{Marianne.Johnson@manchester.ac.uk}.
Research partially supported by the \textit{Manchester Centre for Interdisciplinary
Computational and Dynamical Analysis} (EPSRC grant EP/E050441/1).} and MARK
KAMBITES\footnote{Email \texttt{Mark.Kambites@manchester.ac.uk}. Research
supported by an RCUK Academic Fellowship.}

    \medskip

    School of Mathematics, \ University of Manchester, \\
    Manchester M13 9PL, \ England.

    \medskip

\end{center}

\begin{abstract}
We study the algebraic structure of the semigroup of all $2 \times 2$
tropical matrices under multiplication. Using ideas from tropical geometry,
we give a complete description of Green's relations and the idempotents
and maximal subgroups of this semigroup.
\end{abstract}

\section{Introduction}

Tropical algebra (also known as max-plus algebra) is the linear
algebra of the real numbers augmented with $-\infty$ when equipped
with the binary operations of addition and maximum. Interest in this
branch of mathematics is motivated by a wide range of applications
in numerous subject areas including combinatorial optimisation and
scheduling problems \cite{butkovic}, analysis of discrete event
systems \cite{maxplus}, control theory \cite{cohen}, formal language
and automata theory \cite{pin, simon}, phylogenetics
\cite{eriksson}, statistical inference \cite{pachter}, algebraic
geometry \cite{bergman, mikhalkin, richter} and
combinatorial/geometric group theory \cite{bieri}. Tropical algebra
and many of its basic properties have been independently
rediscovered many times by researchers in these fields. The first
detailed axiomatic study of ``max-plus algebra" was conducted by
Cuninghame-Green \cite{cuninghame} and this theory has been
developed further by a number of researchers (see \cite{baccelli,
heidergott} for surveys).

Many problems arising from application areas are naturally expressed
as tropical matrix algebra problems, and much of the theory of
tropical algebra is concerned with matrices. An important aspect is
the algebraic structure of tropical matrices under multiplication;
many authors have proved a number of interesting \textit{ad hoc}
results (see for example \cite{dalessandro, gaubert, pin, simon})
but so far there has been no systematic study in this area. This
surprising omission is due largely to the difficulty, both
conceptual and technical, of the subject. Even the case of $2 \times
2$ matrices, which is the main object of study in this paper,
demonstrates a number of interesting phenomena. We believe
that the development of a coherent and comprehensive theory of
tropical matrix semigroups of arbitrary finite dimension is a major
challenge.

The aim of this paper is to initiate the systematic study of the
semigroup-theoretic structure of tropical matrices under
multiplication, by considering the most natural starting
point: the monoid of all $2 \times 2$ tropical matrices. We give a
complete geometric description of Green's relations in this
semigroup, from which we are also able to deduce that the semigroup
is regular, and to describe all of its maximal subgroups. Since
conducting this research, we have learned that an independent study of
some of these topics has recently been conducted by Izhakian and
Margolis \cite{izhakiantalk}.

In addition to this introduction, this paper comprises three sections.
In Section~\ref{sec_prelim} we give a brief expository introduction to
the tropical semiring and tropical matrix algebra, including a summary
of known results about tropical matrix semigroups. Section~\ref{sec_green} is devoted to an
examination of the ideal structure of the monoid of all $2 \times 2$
tropical matrices, obtaining in particular geometric descriptions of Green's
relations $\mathcal{L}$, $\mathcal{R}$, $\mathcal{H}$, $\mathcal{D}$ and
$\mathcal{J}$, and of the associated partial orders.
Finally, in Section~\ref{sec_idpt} we consider the idempotent elements
of this monoid; combined with the results of the previous section, this
allows us to prove that the monoid is \textit{regular}, and to describe
completely its maximal subgroups.

\section{Preliminaries}\label{sec_prelim}

Let $\bar{\mathbb{R}} = \mathbb{R} \cup \{-\infty\}$. We extend the
addition and order on $\mathbb{R}$ to $\bar{\mathbb{R}}$ in
the obvious way, and define
operations multiplication $\otimes$ and addition $\oplus$ on $\bar{\mathbb{R}}$
by $a \otimes b = a + b$ and $a \oplus b = \max\{a, b\}$ for all
$a, b \in \bar{\mathbb{R}}$. Then $\bar{\mathbb{R}}$ is a
semiring with multiplicative identity $0$ and additive identity
$-\infty$. In fact, $\bar{\mathbb{R}}$ is an idempotent semifield,
since 
$a \oplus a = a$ for all $a \in \bar{\mathbb{R}}$ and $a \otimes -a
= 0$ for all $a \in \mathbb{R}$. We call $(\bar{\mathbb{R}},
\otimes, \oplus )$ the \emph{tropical semiring}; some authors refer to
it as the \textit{max-plus} semiring.

For each positive integer $n$ let $M_n(\bar{\mathbb{R}})$ denote the
set of $n \times n$ matrices with entries in $\bar{\mathbb{R}}$. The
$\otimes$ and $\oplus$ operations on $\barr$ induce corresponding
operations on $M_n(\bar{\mathbb{R}})$ in the obvious way. Indeed, if
$A, B \in M_n(\bar{\mathbb{R}})$ then we have
\begin{eqnarray*}
(A \otimes B)_{ij} &=& \bigoplus_{k=1}^{n} A_{ik} \otimes B_{kj}, \text{ and }\\
(A \oplus B)_{ij} &=& A_{ij} \oplus B_{ij},
\end{eqnarray*}
for all $1 \leq i, j \leq n$, where $X_{i,j}$ denotes the $(i,j)$th
entry of the matrix $X$. For brevity, we shall usually write
$AB$ in place of $A \otimes B$ for a product of matrices. It is then
easy to check that $M_n(\bar{\mathbb{R}})$ is an idempotent
semiring, with multiplicative identity
\[
\left(
\begin{array}{ccccc}
0&-\infty &\cdots& -\infty\\
-\infty&0 & \ddots &\vdots\\
\vdots&\ddots &\ddots &-\infty\\
-\infty&\cdots &-\infty &0\\
\end{array}\right)
\]
and additive identity
\[
\left(
\begin{array}{ccc}
-\infty&\cdots& -\infty\\
\vdots& \ddots&\vdots\\
-\infty&\cdots &-\infty\\
\end{array}\right).
\]
We call $(M_n(\bar{\mathbb{R}}), \otimes, \oplus)$ the \emph{$n
\times n$ tropical matrix semiring}. The main object of study in this
paper is the multiplicative monoid of this semiring, which we shall
refer to simply as $M_n(\bar{\mathbb{R}}), \otimes$.

We summarise some known results about this semigroup. It is readily
verified (see for example \cite{ellis}) that the invertible elements
of $\msn$ (the \textit{units} in
the terminology of ring theory or semigroup theory) are exactly the
\textit{monomial matrices}, that is, matrices with exactly one entry
in each row and column not equal to $-\infty$. It follows easily that
the group of units in $\msn$ is isomorphic to the permutation wreath
product $\mathbb{R} \wr (S_n, \lbrace 1, \dots, n \rbrace)$ of the additive
group of real numbers with the symmetric group on $n$ points.

It is known \cite{dalessandro} that the semigroup $\msn$ is
\textit{weakly permutable}, in the sense that there is a positive
integer $k$ such that every sequence of $k$ elements admits two
distinct permutations such that the corresponding products of
elements are equal in the semigroup. It is clear from the definition
that weak permutability is inherited by subsemigroups. It is also
known \cite{blyth,curzio} that a group is weakly permutable if and
only if it has an abelian subgroup of finite index. It follows
that every subgroup of $\msn$ (including those whose identity
element is an idempotent other than the identity of $\msn$) has an
abelian subgroup of finite index. Moreover, it is also shown in
\cite{dalessandro} that finitely generated subsemigroups of $\msn$
have polynomial growth.

The semigroup $M_n(\bar{\mathbb{R}})$ acts naturally on the left and
right of the space of $n$-vectors over $\barr$, known as
\textit{affine tropical $n$-space}. Notice that a tropical multiple
of a vector $(x_1, \ldots, x_n) \in \bar{\mathbb{R}}^n$ has the form
$(x_1+\lambda, \ldots, x_n+\lambda)$ for some $\lambda \in
\bar{\mathbb{R}}$. From affine tropical $n$-space we obtain
\textit{projective tropical $(n-1)$-space} by discarding the zero
vector $(-\infty,
 \ldots, -\infty)$ and identifying two non-zero vectors if
one is a tropical multiple of the other.

We can represent affine tropical $2$-space (or the \textit{tropical
plane}) pictorially as a quadrant of the Euclidean plane with two
sets of axes as shown in Figure \ref{axes}. The set of tropical
multiples of $v \in \barr^2$ is then equal to the line of gradient 1
which passes through $v$, as shown in Figure~\ref{scaling}; notice that this
line includes the zero vector. Vector addition in $\barr^2$ may
also be described pictorially as follows. For $u, v \in \barr^2$ the
sum $u \oplus v$ is given by the upper right-most vertex of the
unique rectangle with $u$ and $v$ as vertices and edges parallel to
the axes, see Figure~\ref{addition}. Note that the sides of this
rectangle may have infinite length.

\begin{figure}[htp]
\begin{center}
\includegraphics[scale=0.5]{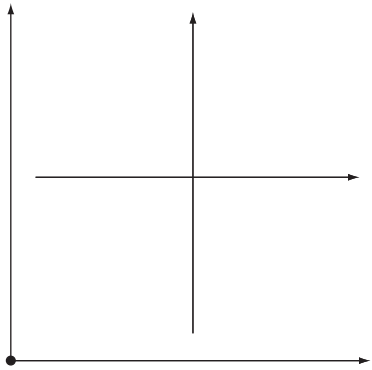}\caption{The tropical axes.}\label{axes}
\end{center}
\end{figure}

\begin{figure}[ht]
\begin{center}\psfrag{a}{u}\psfrag{b}{$u\oplus v$ }\psfrag{c}{$v$}
\psfrag{d}{\hspace{-0.7cm}$\lambda \otimes v$}\psfrag{e}{$v$}
\subfigure[{Tropical vector scaling in $\barr^2$.
}]{\includegraphics[scale=0.5]{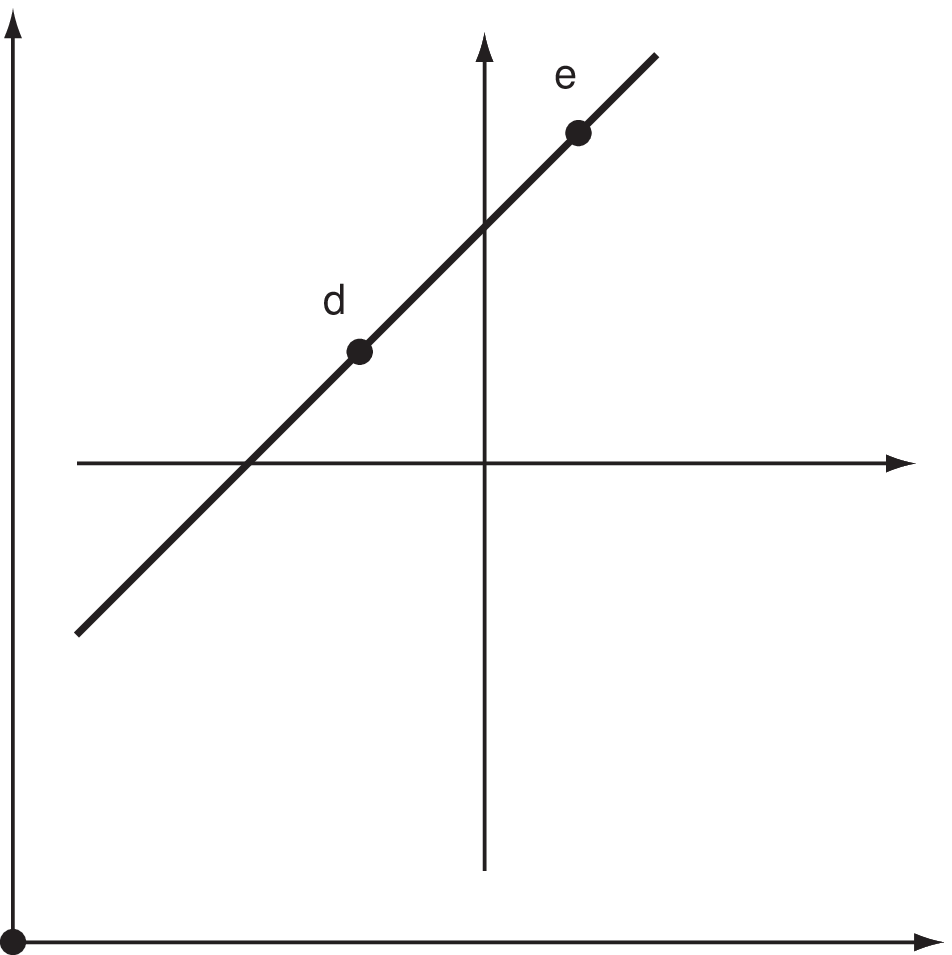}\label{scaling}}\hspace{1cm}
\subfigure[{Tropical vector addition in
$\barr^2$.}]{\includegraphics[scale=0.5]{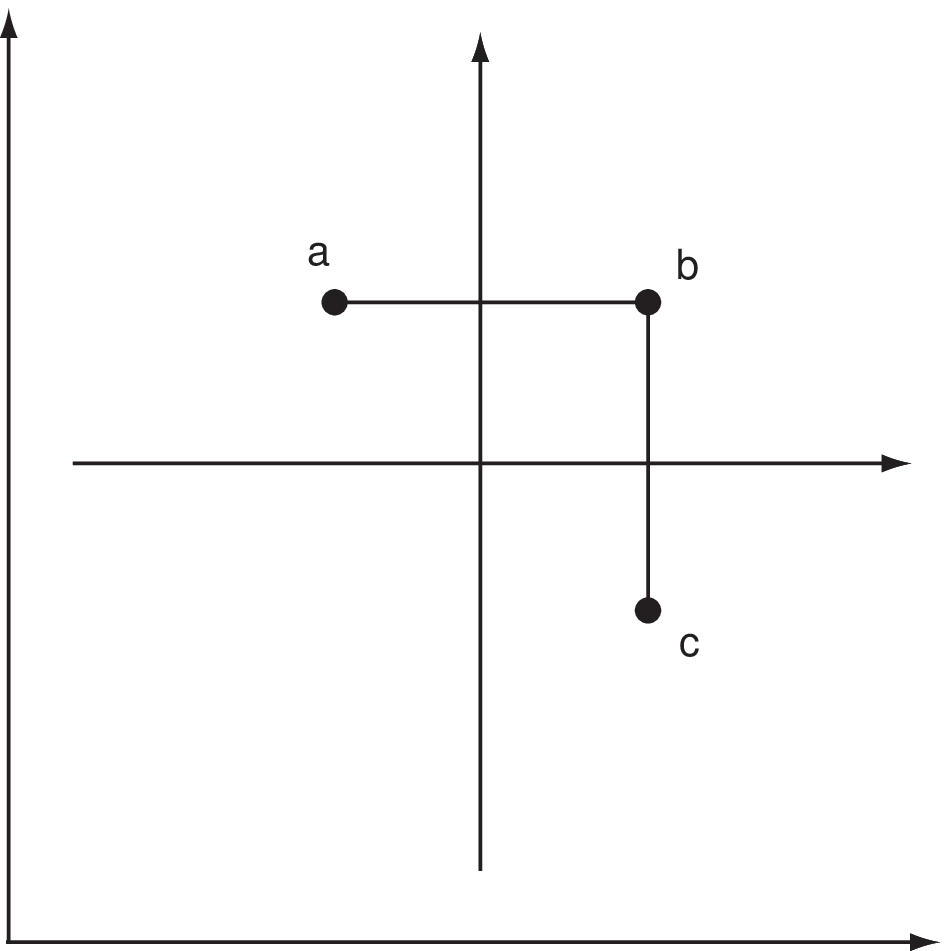}\label{addition}}
\caption{Tropical linear combinations of
vectors.}\label{lincomb}\end{center}
\end{figure}

\textit{Projective} tropical $1$-space can be conveniently identified with
the two point compactification of the real line
$$\hatr = \mathbb{R} \cup \lbrace -\infty, \infty \rbrace$$
via the map which takes the equivalence class of a non-zero vector
$(a,b) \in \barr^2$ to $b-a$ if $a$ and $b$ are real, $\infty$ if $a
= -\infty$ and $-\infty$ if $b = -\infty$\footnote{In fact, if we
extend subtraction in the obvious way to $\bar{\mathbb{R}} \times
\bar{\mathbb{R}} \setminus \lbrace (-\infty, -\infty) \rbrace$,
we have that the projection of $(a,b)$
corresponds to $b-a$ for all non-zero points $(a,b)$.}. In
pictorial terms, the image of a point $(a,b)$ with real coordinates
under this projection may be thought of as the intercept of the line
of gradient $1$ through the point $(a,b)$ with the
vertical\footnote{The choice of the vertical axis here is
of course arbitrary. One could instead take signed perpendicular distance
of the given line from the point $(0,0)$; this is arguably
conceptually cleaner but makes no practical difference and
introduces an extra factor of $\sqrt{2}$ into computations.} axis
through $(0,0)$.

\section{Green's Relations}\label{sec_green}

We begin by briefly recalling the definitions of a number of binary
relations which are used to analyse the structure of a monoid. For
further reference and examples we refer the reader to \cite{clifford}.

Let $S$ be a monoid and let $A, B \in S$.
We define a binary relation $\leqr$ on $S$ by $A \leqr B$ exactly if
$AS \subseteq BS$, or equivalently, if $A = BX$ for some $X \in S$.
Similarly, we define $A \leql B$ if $SA \subseteq SB$, and $A \leqj
B$ if $SAS \subseteq SBS$. The relations $\leqr$, $\leql$ and
$\leqj$ are \textit{preorders} (reflexive, transitive binary
relations) on the monoid $S$.

Next, we define a binary relation $\mathcal{R}$ on $S$ by
$A\mathcal{R}B$ if $A$ and $B$ generate the same principal right
ideal in $S$, or equivalently, if $A \leqr B$ and $B \leqr A$.
Similarly, we define $A\mathcal{L}B$ if $A$ and $B$ generate the
same principal left ideal in $S$, and $A \mathcal{J} B$ if $A$ and
$B$ generate the same principal two-sided ideal in $S$. The
relations $\mathcal{R}$, $\mathcal{L}$ and $\mathcal{J}$ are all
equivalence relations. In fact they are the largest equivalence
relations contained in the preorders $\leqr$, $\leql$ and $\leqj$
respectively, from which it follows that these preorders induce
partial orders on the equivalence classes of the respective
equivalence relations.

We let $\mathcal{H}$ denote the intersection $\mathcal{L} \cap
\mathcal{R}$, and $\mathcal{D}$ be the intersection of all
equivalence relations containing $\mathcal{L}$ and $\mathcal{R}$.
Both are equivalence relations, and it is well known and easy to
show that we have $A \mathcal{D} B$ if and only if there exists $Z
\in S$ such that $A \mathcal{R} Z$ and $Z \mathcal{L} B$.

We shall also need some basic ideas from tropical geometry. For each
positive integer $k$ we define a \emph{($k$-generated)
convex cone} in $\bar{\mathbb{R}}^n$ to be a non-empty set which is
the set of all tropical linear combinations of vectors from some given subset (of cardinality $k$
or less) of $\bar{\mathbb{R}}^n$. Convex cones are the tropical analogue of
linear subspaces in classical linear algebra. However, we shall
refrain from terming them \textit{(tropical linear) subspaces},
since this term is generally applied to a distinct concept which in
tropical geometry plays the role of affine linear subspaces in
classical algebraic geometry \cite{develin}.

Since convex cones are closed under scaling, each convex cone $V$ in 
affine tropical $n$-space is naturally associated with a subset in 
projective $(n-1)$-space, which we call the \textit{projectivisation} of 
$V$. We define a \textit{($k$-generated) convex set} in projective 
tropical $(n-1)$-space to be the projectivisation of a ($k$-generated) 
convex cone in affine tropical $n$-space. In the case that $n=2$, so that 
the projective space is $\hatr$, it is easily seen that the only convex 
sets are the empty set, the singleton sets and \textit{intervals} (open, closed,
half-open and half-closed) where the latter are defined in the obvious
way using the order on $\hatr$. The $2$-generated convex sets are the
empty set, singleton sets, and \textit{closed} intervals of $\hatr$; we
call these the \textit{closed} convex sets.

Now let $A \in \msn$. We define the \textit{column
space} $C(A)$ of $A$ to be the convex cone which is the set of tropical
linear combinations of the columns of $A$.
We shall also be interested in the projectivisation of $C(A)$, which
we call the \textit{projective column space} of $A$ and denote
$\prjc{A}$. Dually, the \textit{row space} $R(A)$ of $A$ is the
convex cone given by the set of tropical linear combinations of the
rows of $A$, and its projectivisation is called the
\textit{projective row space} of $A$, denoted $\prjr{A}$.

The following characterisation of the $\mathcal{R}$ and
$\mathcal{L}$ preorders is well known at least in the
case of matrices over fields (see for example
\cite[Lemma~2.1]{okninski}) and extends without difficulty to matrices
over the tropical semiring. For completeness, we include a brief proof.

\begin{lemma}\label{lemma_rclasses}
Let $A, B \in M_n(\bar{\mathbb{R}})$. Then the following are
equivalent:
\begin{itemize}
\item[(i)] $A \leqr B$ [respectively, $A \leql B$];
\item[(ii)] $C(A) \subseteq C(B)$ [respectively, $R(A) \subseteq R(B)$] in affine tropical $n$-space;
\item[(iii)] $\prjc{A} \subseteq \prjc{B}$ [respectively, $\prjr{A} \subseteq \prjr{B}$] in projective tropical $(n-1)$-space.
\end{itemize}
\end{lemma}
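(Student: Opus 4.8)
The plan is to prove the equivalence by establishing the chain (i) $\Leftrightarrow$ (ii) and then (ii) $\Leftrightarrow$ (iii); by duality it suffices to treat the $\leqr$/column space case throughout, since the $\leql$/row space statement follows by transposing. Recall that $A \leqr B$ means $A = BX$ for some $X \in \msn$, and the key observation is simply that in the tropical semiring, as in the classical case, the columns of $BX$ are precisely the tropical linear combinations of the columns of $B$ with coefficients drawn from the corresponding column of $X$. Concretely, writing $B = (b^{(1)} \mid \cdots \mid b^{(n)})$ for the columns of $B$, the $j$th column of $BX$ is $\bigoplus_{k=1}^n X_{kj} \otimes b^{(k)}$.

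For (i) $\Rightarrow$ (ii): if $A = BX$, then by the observation above every column of $A$ lies in $C(B)$ (it is an explicit tropical linear combination of columns of $B$). Since $C(B)$ is a convex cone, hence closed under tropical linear combinations, it contains all tropical linear combinations of the columns of $A$, i.e. $C(A) \subseteq C(B)$. For (ii) $\Rightarrow$ (i): if $C(A) \subseteq C(B)$, then in particular each column $a^{(j)}$ of $A$ lies in $C(B)$, so there exist scalars $X_{1j}, \dots, X_{nj} \in \barr$ with $a^{(j)} = \bigoplus_{k=1}^n X_{kj} \otimes b^{(k)}$. Assembling these scalars into a matrix $X \in \msn$ gives $BX = A$, whence $A \leqr B$. (One should note the degenerate case where a column of $A$ is the zero vector $(-\infty,\dots,-\infty)$, which is always a tropical linear combination — the empty one, or equivalently with all coefficients $-\infty$ — so it lies in every column space and causes no difficulty.)

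For (ii) $\Leftrightarrow$ (iii): projectivisation is the map that quotients affine tropical $n$-space by the scaling relation, and by definition $\prjc{A}$ is the image of $C(A)$ under this map. The implication (ii) $\Rightarrow$ (iii) is immediate since the image of a subset is contained in the image of a superset. For (iii) $\Rightarrow$ (ii), the point is that both $C(A)$ and $C(B)$ are closed under scaling (being convex cones), so each is the full preimage of its own projectivisation; hence if $\prjc{A} \subseteq \prjc{B}$ then taking preimages gives $C(A) \subseteq C(B)$. The only genuinely careful point here is the treatment of the zero vector, which is discarded in forming projective space: the zero vector lies in every convex cone, so it contributes nothing to the comparison and the preimage argument goes through once we observe that $C(A) \setminus \{0\}$ and $C(A)$ have the same projectivisation.

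None of the steps presents a serious obstacle — this is essentially a bookkeeping lemma transporting a standard fact about matrix multiplication into the tropical setting. The only thing requiring mild attention is the consistent handling of the zero vector $(-\infty,\dots,-\infty)$ and of rows or columns of $X$ that are entirely $-\infty$, but as noted above the zero vector is a member of every convex cone and a tropical linear combination with all coefficients $-\infty$, so it never obstructs any containment.
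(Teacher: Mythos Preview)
Your proof is correct and follows essentially the same approach as the paper's own proof: reduce to the $\leqr$/column space case by duality, observe that (ii) $\Leftrightarrow$ (iii) because convex cones are closed under scaling, and prove (i) $\Leftrightarrow$ (ii) via the fact that the columns of $BX$ are tropical linear combinations of the columns of $B$. Your treatment is slightly more explicit about the zero vector and the preimage argument for (iii) $\Rightarrow$ (ii), but the substance is identical.
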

\begin{proof}
We prove the equivalence of the statements involving $\leqr$ and
column spaces, the equivalence of the statements involving $\leql$
and row spaces being dual. The equivalence of (ii) and (iii)
follows from the fact that convex cones, and hence column spaces, are closed
under scaling, so it will suffice to show that (i) and (ii) are equivalent.

If (i) holds, that is, $A \leqr B$, then by
definition there is a matrix $X \in \msn$ such that $BX = A$. Now, since
the columns of $BX$ are contained in $C(B)$ it follows that
 $C(BX) = C(A) \subseteq C(B)$ so that (ii) holds.
Conversely, suppose that (ii) holds. Since the tropical semiring has a
multiplicative identity, the columns of $A$ are contained in $C(A)$, and
hence in $C(B)$. Thus, every column of $A$ can be written as a
linear combination of the columns of $B$, which means exactly that there
exists $X \in \msn$ such that $A = BX$. Thus (i) holds.
\end{proof}

\begin{corollary}\label{cor_rclasses}
Let $A, B \in M_n(\bar{\mathbb{R}})$. Then the following are
equivalent:
\begin{itemize}
\item[(i)] $A \mathcal{R} B$ [respectively, $A \mathcal{L} B$];
\item[(ii)] $C(A) = C(B)$ [respectively, $R(A) = R(B)$] in affine tropical $n$-space;
\item[(iii)] $\prjc{A} = \prjc{B}$ [respectively, $\prjr{A} = \prjr{B}$] in projective tropical $(n-1)$-space.
\end{itemize}
\end{corollary}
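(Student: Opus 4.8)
The plan is to deduce this immediately from Lemma~\ref{lemma_rclasses} together with the definition of $\mathcal{R}$ (respectively $\mathcal{L}$) as the symmetric part of the preorder $\leqr$ (respectively $\leql$): recall that $A \mathcal{R} B$ holds precisely when both $A \leqr B$ and $B \leqr A$.

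For the $\mathcal{R}$/column-space statement, first I would apply Lemma~\ref{lemma_rclasses} to the ordered pair $(A,B)$, obtaining that $A \leqr B$ is equivalent to $C(A) \subseteq C(B)$, and then apply it again to the pair $(B,A)$, obtaining that $B \leqr A$ is equivalent to $C(B) \subseteq C(A)$. Taking the conjunction, $A \mathcal{R} B$ is equivalent to the simultaneous inclusions $C(A) \subseteq C(B)$ and $C(B) \subseteq C(A)$, which is exactly the equality $C(A) = C(B)$. This establishes the equivalence of (i) and (ii).

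The equivalence of (i) and (iii) is obtained in exactly the same manner, using part~(iii) of Lemma~\ref{lemma_rclasses} in place of part~(ii): $A \mathcal{R} B$ iff $\prjc{A} \subseteq \prjc{B}$ and $\prjc{B} \subseteq \prjc{A}$, i.e.\ iff $\prjc{A} = \prjc{B}$. (Alternatively, (ii) and (iii) are equivalent for the same reason as in Lemma~\ref{lemma_rclasses}, namely that column spaces are closed under scaling, so that passing to projectivisations is an inclusion-preserving bijection between convex cones and convex sets.) The statements for $\mathcal{L}$, row spaces and projective row spaces follow by the dual argument, just as in the proof of Lemma~\ref{lemma_rclasses}. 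There is no genuine obstacle here: the only point requiring a little care is to invoke the lemma for both orderings of the pair $\{A,B\}$, not just one.
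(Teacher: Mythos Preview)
Your argument is correct and is exactly the intended one: the paper states this result as an immediate corollary of Lemma~\ref{lemma_rclasses} without writing out a proof, precisely because it follows by applying the lemma in both directions and using that $\mathcal{R}$ (respectively $\mathcal{L}$) is the symmetric part of $\leqr$ (respectively $\leql$).
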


By Corollary~\ref{cor_rclasses}, the $\mathcal{R}$-classes
of $\mstwo$ are in a natural bijective correspondence with the $2$-generated
tropical convex cones in the tropical plane, and hence also with the
with the closed convex sets
in $\hatr$. For such set $M \subseteq \hatr$ we
denote by $R_M$ the corresponding $\mathcal{R}$-class. Since $\hatr$ with
the obvious topology is homeomorphic to the closed unit interval, and
the closed intervals are definable topologically, combining with
Lemma~\ref{lemma_rclasses} yields the following natural description
of the natural partial order on the $\mathcal{R}$-classes, or equivalently,
on the intersection lattice of principal right ideals.

\begin{corollary}
The lattices of principal right ideals and of principal left ideals in
$\mstwo$ are isomorphic to the intersection lattice generated by the
closed subintervals of the closed unit interval.
\end{corollary}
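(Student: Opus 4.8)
The plan is to assemble the ingredients already in place and then make one small topological transport. First, I would note that Lemma~\ref{lemma_rclasses} and Corollary~\ref{cor_rclasses} together show that the assignment sending the $\mathcal{R}$-class of $A \in \mstwo$ (equivalently, the principal right ideal $A \mstwo$) to its projective column space $\prjc{A}$ is a well-defined bijection between the set of principal right ideals of $\mstwo$ and the set of those subsets of $\hatr$ which arise as a projective column space, and that $A\mstwo \subseteq B\mstwo$ if and only if $\prjc{A} \subseteq \prjc{B}$; hence this bijection and its inverse are both inclusion-preserving, so the poset of principal right ideals under inclusion is order-isomorphic to the poset of projective column spaces under inclusion. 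It then only remains to identify the latter poset concretely.

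Second, I would pin down exactly which subsets of $\hatr$ occur as a $\prjc{A}$. By definition $\prjc{A}$ is the projectivisation of the convex cone generated by the two columns of $A$, so it is precisely a $2$-generated convex set in projective tropical $1$-space; conversely every such set is realised, by taking a matrix whose columns project to the appropriate one or two points of $\hatr$, the empty set being realised by the matrix all of whose entries equal $-\infty$. By the discussion preceding Lemma~\ref{lemma_rclasses}, the $2$-generated convex subsets of $\hatr$ are exactly the closed convex sets: the empty set, the singletons, and the closed intervals. Thus the poset of principal right ideals of $\mstwo$ is isomorphic to the poset of closed convex subsets of $\hatr$ ordered by inclusion.

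Finally, I would transport this across a homeomorphism $\hatr \cong [0,1]$. Since closedness and connectedness are topological invariants, and the closed convex subsets of $\hatr$ (respectively of $[0,1]$) are exactly the empty set together with the non-empty closed connected subsets, such a homeomorphism carries the closed convex subsets of $[0,1]$ bijectively and inclusion-preservingly onto the closed convex subsets of $\hatr$. It then suffices to check that the closed convex subsets of $[0,1]$ are precisely the members of the intersection lattice generated by the closed subintervals: this family contains every closed subinterval, contains $\emptyset$ (for instance as $[0,0]\cap[1,1]$), and is closed under intersection since $[a,b]\cap[c,d]$ is again a closed interval or empty; moreover it is a lattice, with meet given by intersection and join by passing to the smallest closed interval (the ``convex hull'') containing the union. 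Chaining the isomorphisms gives the statement for principal right ideals, and the statement for principal left ideals follows by the dual argument with row spaces in place of column spaces (or by applying the transpose anti-automorphism of $\mstwo$, which interchanges $\mathcal{L}$ and $\mathcal{R}$ while preserving inclusion of ideals). I expect the only genuine content, as opposed to bookkeeping, to be the step identifying projective column spaces — knowing that the projectivisation of a cone on two vectors is exactly a closed interval, and that every closed convex set so arises — but this has already been recorded in the preliminaries, so the remaining work is just to be careful with the degenerate cases (empty set, singletons) and with the precise reading of ``intersection lattice generated by''.
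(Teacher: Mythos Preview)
Your proposal is correct and follows essentially the same route as the paper: the paper's argument is just the short paragraph preceding the corollary, which invokes Corollary~\ref{cor_rclasses} for the bijection with closed convex sets in $\hatr$, Lemma~\ref{lemma_rclasses} for the inclusion order, and the homeomorphism $\hatr\cong[0,1]$ together with the topological definability of closed intervals. You have simply spelled out these steps more carefully, including the verification that the closed convex subsets of $[0,1]$ coincide with the intersection lattice generated by closed subintervals and the treatment of the left-ideal case via duality.
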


It follows easily from the description of tropical vector scaling
and addition given in Section~\ref{sec_prelim} that the
$2$-generated convex cones in the affine tropical plane can take $8$
essentially distinct forms. Figure~\ref{fig:cones} shows these in
affine space, the captions giving the associated subsets of projective
space $\hatr$.

\begin{figure}[ht]
\begin{center}
\psfrag{g}{\hspace{-0.2cm}$y$}
\psfrag{f}{\hspace{-0.2cm}$y$}\psfrag{i}{\hspace{-0.2cm}$y$}\psfrag{h}{$x$}\psfrag{j}{\hspace{0.1cm}$y$}
\subfigure[$\emptyset$]{\includegraphics[scale=0.25]{dot.eps}\label{emptyset}}\hspace{1cm}
\subfigure[$\lbrace -\infty
\rbrace$]{\includegraphics[scale=0.25]{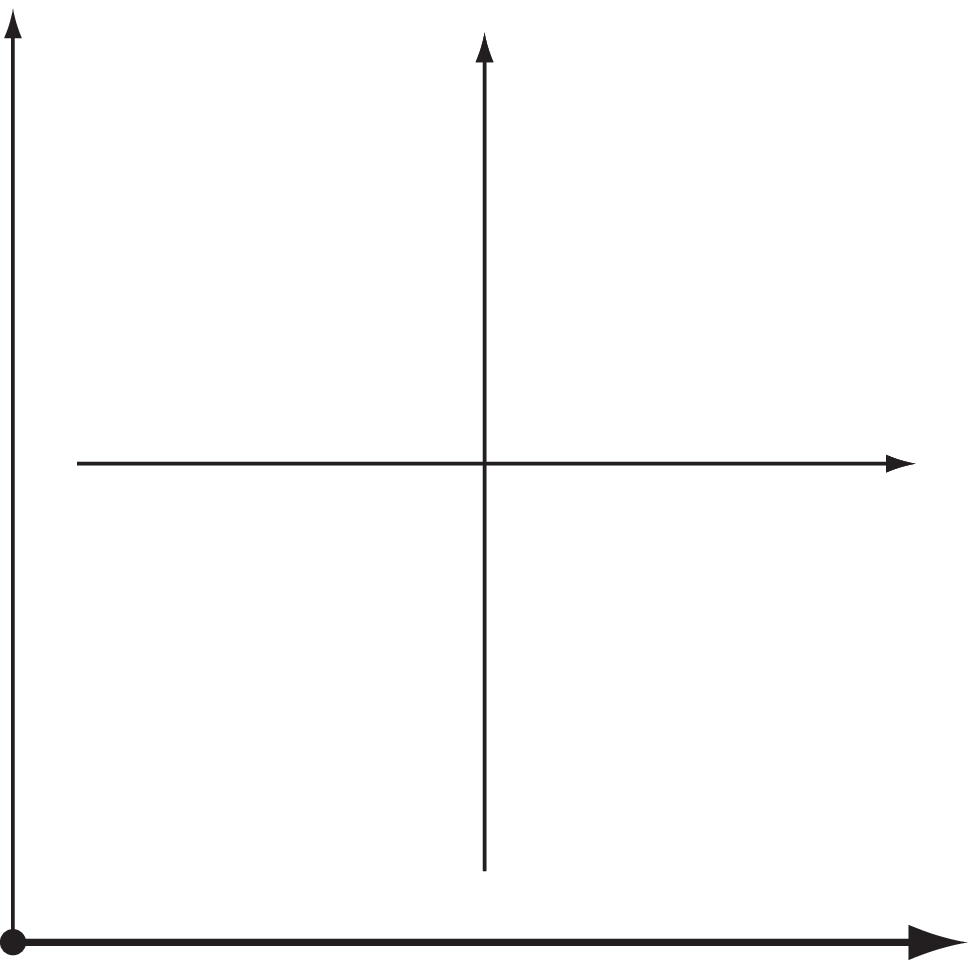}\label{minusinfty}}
\hspace{1cm} \subfigure[$\lbrace y
\rbrace$]{\includegraphics[scale=0.25]{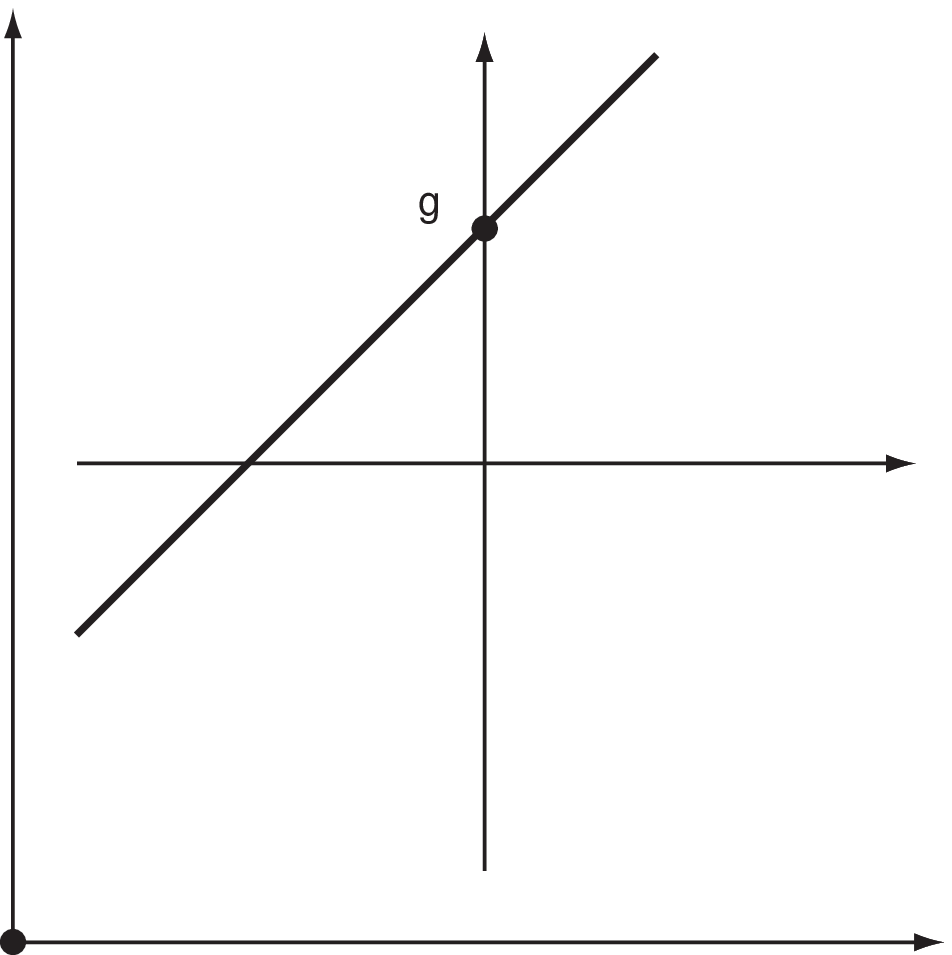}\label{k}}
\hspace{1cm} \subfigure[$\lbrace \infty
\rbrace$]{\includegraphics[scale=0.25]{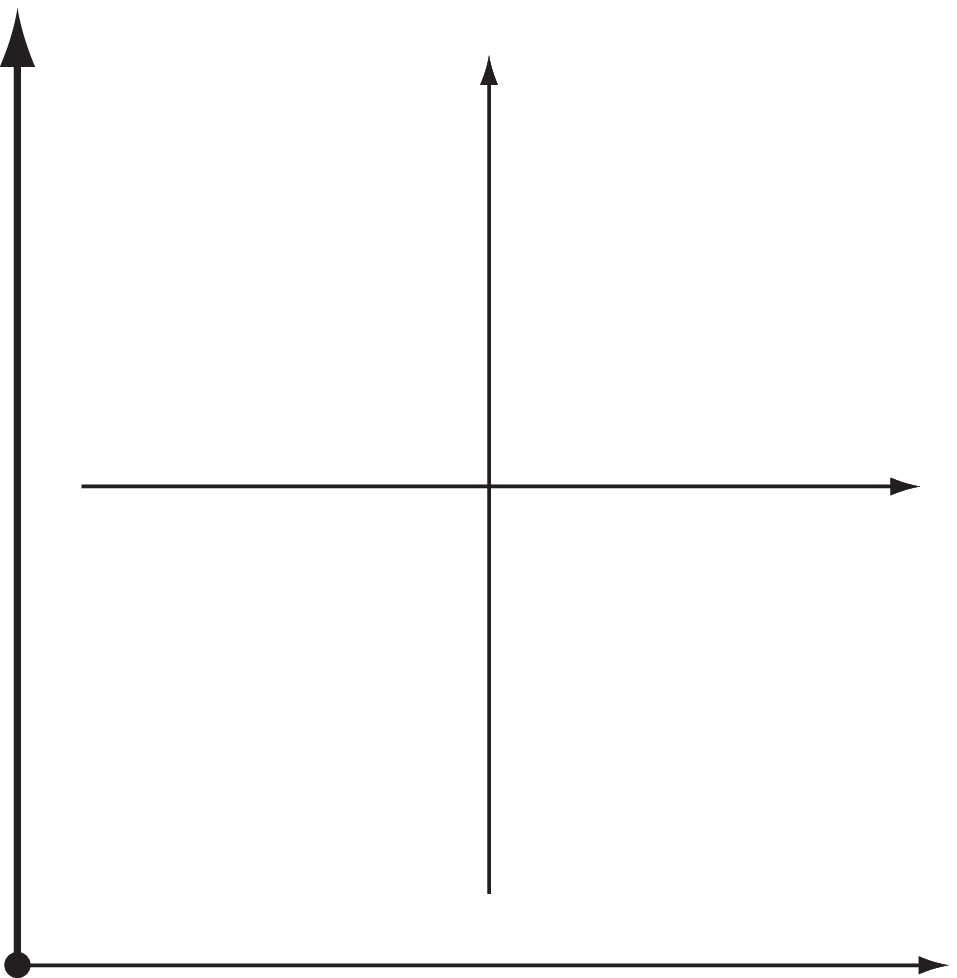}\label{infty}}
\hspace{1cm} \subfigure[${\left[-\infty,
y\right]}$]{\includegraphics[scale=0.25]{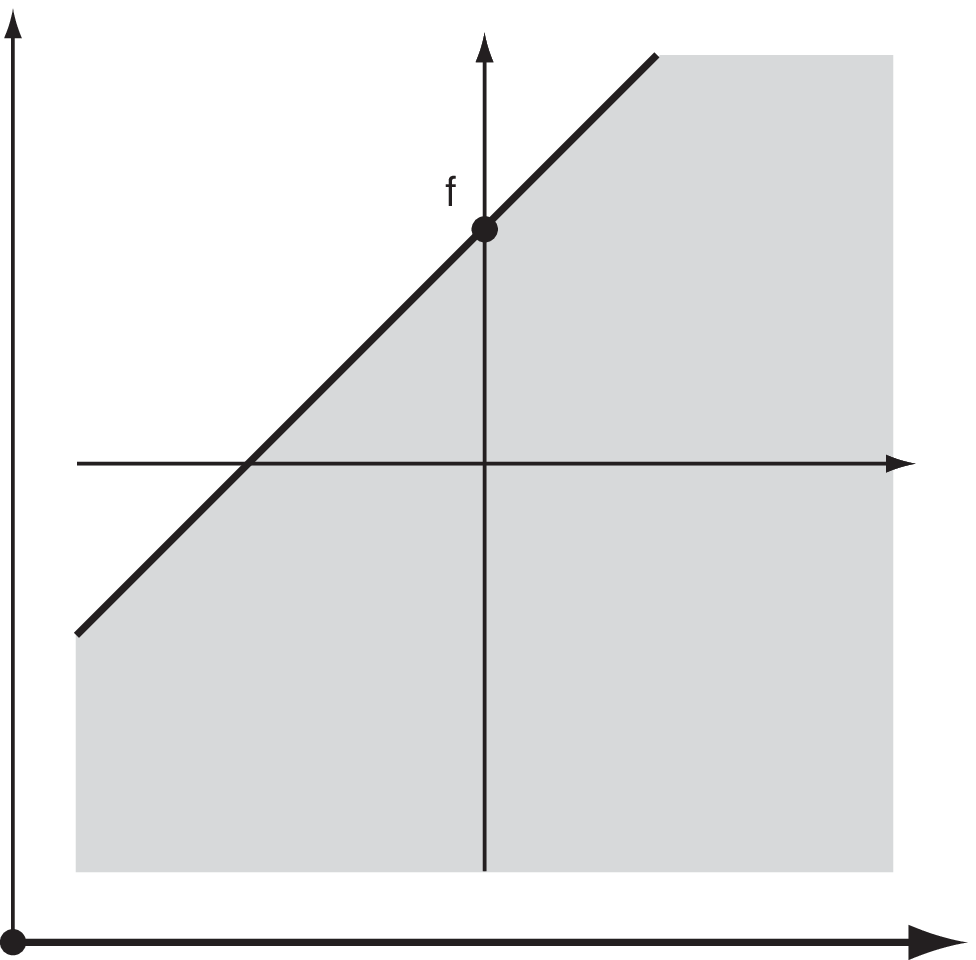}\label{minusinftyk}}
\hspace{1cm}
\subfigure[${\left[x,y\right]}$]{\includegraphics[scale=0.25]{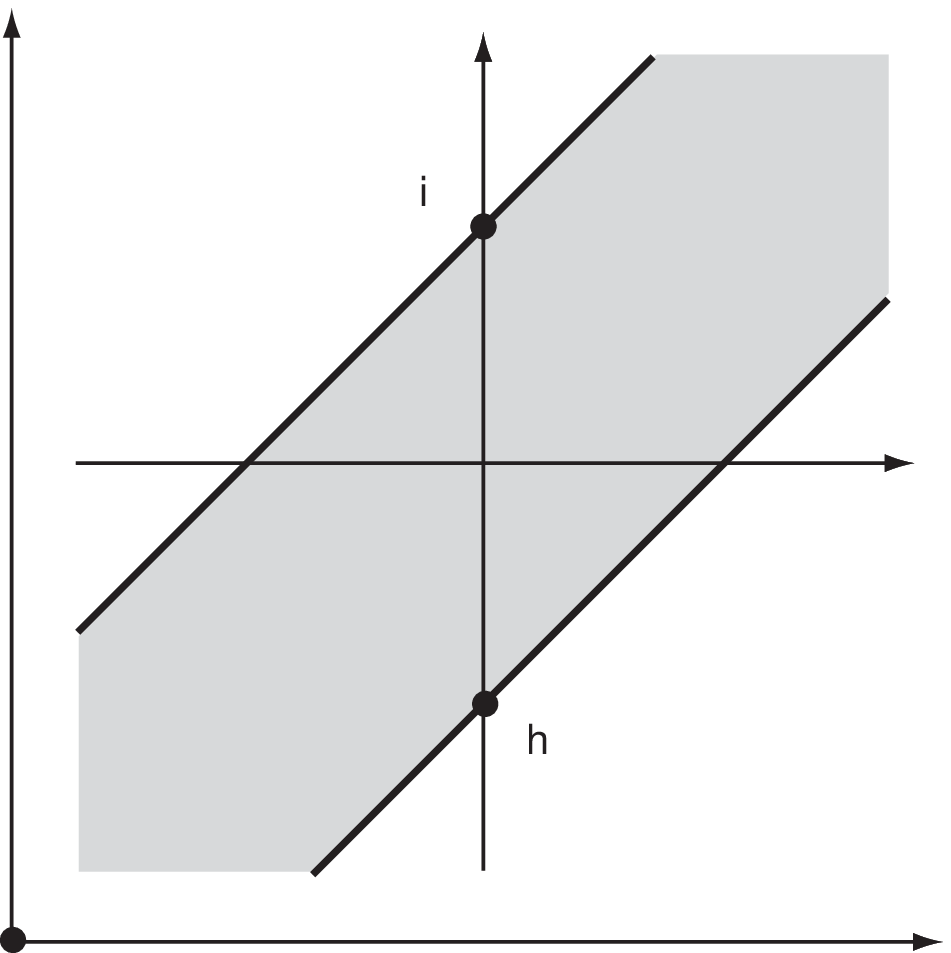}\label{jk}}
\hspace{1cm} \subfigure[${\left[y,
\infty\right]}$]{\includegraphics[scale=0.25]{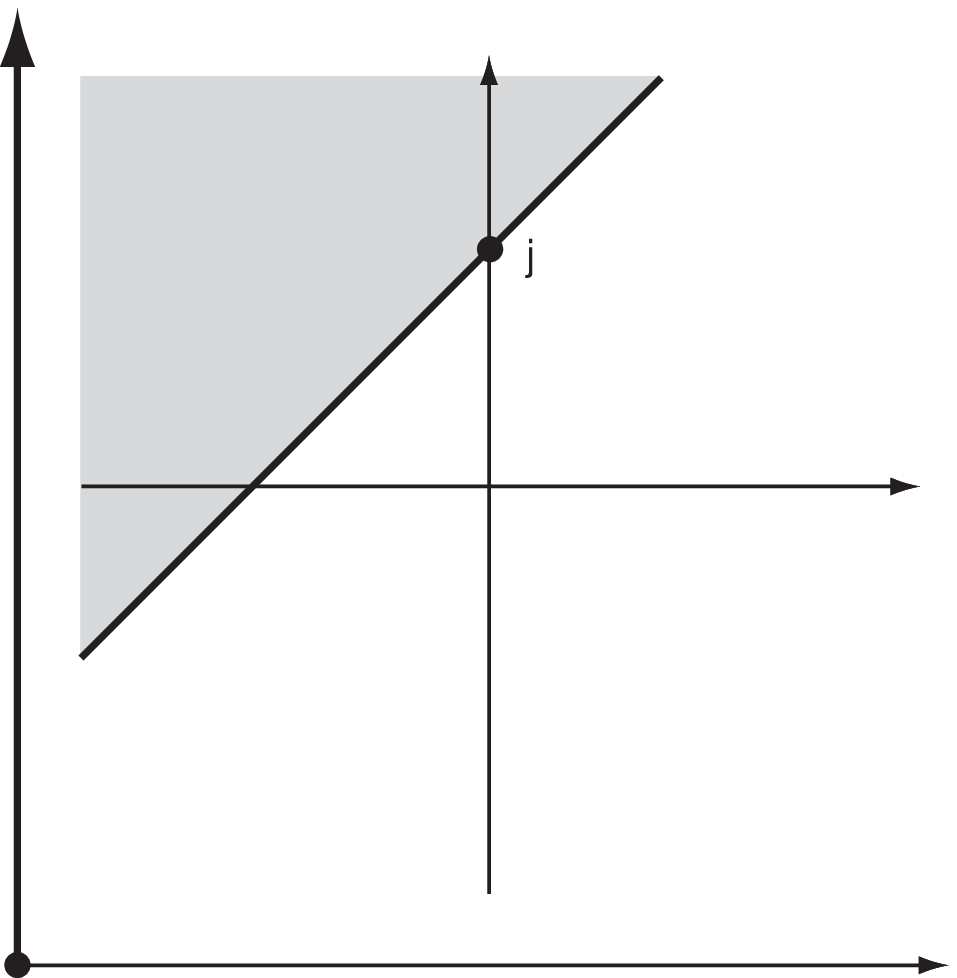}\label{kinfty}}
\hspace{1cm} \subfigure[${\left[-\infty,
\infty\right]}$]{\includegraphics[scale=0.25]{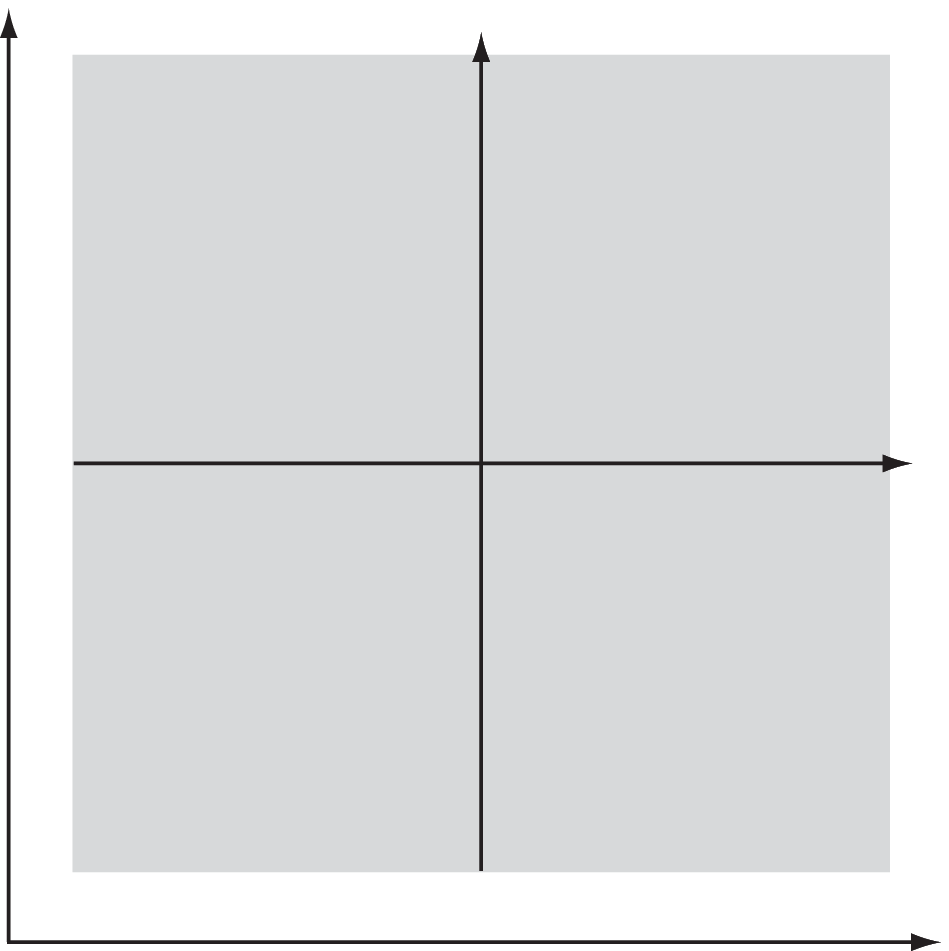}\label{hatr}}
\caption{The $2$-generated tropical convex cones of
$\bar{\mathbb{R}}^2$, which correspond to closed intervals in
$\hatr$ and to the $\mathcal{R}$-classes of
$M_2(\bar{\mathbb{R}})$.}\label{fig:cones}\end{center}
\end{figure}

Using the geometric description of tropical vector operations given
in Figure~\ref{lincomb}, it is easily seen that for a non-zero
matrix
$$A = \left( \begin{array}{c c} a & b  \\ c&d\\ \end{array} \right)$$
the (affine) column space $C(A)$ is exactly the region of the
quadrant bounded by the lines
$$\lbrace (a+\lambda, c+\lambda) \mid \lambda \in \barr \rbrace \text{ and } \lbrace (b+\lambda, d+\lambda) \mid \lambda \in \barr \rbrace.$$
If  $A$ has a zero column, $a=c=-\infty$, say, then the projective
column space of $A$ is  the singleton $\{d-b\}$ (using the natural
extension of substraction to $\barr \times \barr \setminus \lbrace (-\infty,-\infty) \rbrace$
as described in Section~\ref{sec_prelim}).
Otherwise, the projective
column space of $A$ is the closed interval (or singleton if $c-a = d-b$)
with endpoints
$c-a$ and $d-b$. Explicit descriptions of the $\mathcal{R}$-classes as sets of
matrices are given in Figure~\ref{fig:rclass}.

\begin{figure}[ht]
\begin{tabular}{l l l l}
$R_\emptyset$ &=& $\left\{
\left( \begin{array}{c c} -\infty & -\infty  \\
-\infty&-\infty\\
\end{array}
\right)\right\}$,\\
\vspace{-0.3cm}\\ $R_{\lbrace -\infty \rbrace}$ &=& $\left\{
\left( \begin{array}{c c} a& b  \\
-\infty&-\infty\\
\end{array}
\right)\mid a, b \in \bar{\mathbb{R}}, a \oplus b \in \mathbb{R}\right\}$,\\
\vspace{-0.3cm}\\
$R_{\lbrace y \rbrace}$ &=& $\left\{
\left( \begin{array}{c c} a & b  \\
a+y&b+y\\
\end{array}
\right)\mid a, b \in \bar{\mathbb{R}}, a \oplus b \in \mathbb{R}\right\}$,\\
\vspace{-0.3cm}\\
$R_{\lbrace \infty \rbrace}$ &=& $\left\{
\left( \begin{array}{c c} -\infty &-\infty  \\
a&b\\
\end{array}
\right)\mid a, b \in \bar{\mathbb{R}}, a \oplus b \in \mathbb{R}\right\}$,\\
\vspace{-0.3cm}\\
$R_{[-\infty, y]}$ &=& $\left\{\left( \begin{array}{c c} a &b \\
-\infty&b+y\\
\end{array}
\right),
\left( \begin{array}{c c} b & a  \\
b+y&-\infty\\
\end{array}
\right)\mid a, b \in \mathbb{R}\right\}$,\\
\vspace{-0.3cm}\\
$R_{[x,y]}$ &=& $\left\{\left( \begin{array}{c c} a &b \\
a+x&b+y\\
\end{array}
\right),
\left( \begin{array}{c c} b & a  \\
b+y&a+x\\
\end{array}
\right)\mid a, b \in \mathbb{R}\right\}$,\\
\vspace{-0.3cm}\\
$R_{[y,\infty]}$ &=& $\left\{\left( \begin{array}{c c} -\infty&a \\
b&a+y\\
\end{array}
\right),
\left( \begin{array}{c c} a & -\infty  \\
a+y&b\\
\end{array}
\right)\mid a, b \in \mathbb{R}\right\}$,\\
\vspace{-0.3cm}\\
$R_{\hatr}$ &=& $\left\{\left( \begin{array}{c c} a &-\infty  \\
-\infty&b\\
\end{array}
\right),
\left( \begin{array}{c c} -\infty & a  \\
b&-\infty\\
\end{array}
\right)\mid a, b \in \mathbb{R}\right\}$.\\
\end{tabular}
\caption{The $\mathcal{R}$-classes of $M_2(\bar{\mathbb{R}})$. The
parameters $x$ and $y$ run through all values in $\mathbb{R}$ with
$x<y$.}\label{fig:rclass}
\end{figure}

For $U \subseteq M_2(\bar{\mathbb{R}})$ we define the transpose of
$U$ to be the set $U^T$ of all transposes of matrices in $U$, $ U^T
= \{ A^T :A \in U\}$. It follows easily from
Corollary~\ref{cor_rclasses} that each $\mathcal{L}$-class is the
transpose of an $\mathcal{R}$-class; for each closed convex
subset $M$ of $\hatr$ we therefore define $L_M = R_M^T$.

Our next objective is to describe the $\mathcal{D}$ and
$\mathcal{J}$ relations and the $\mathcal{J}$-preorder on $\mstwo$.
Recall that every $\mathcal{D}$-class and every $\mathcal{J}$-class
is a union of $\mathcal{R}$-classes, and that the
$\mathcal{R}$-class of a matrix is determined by its projective
column space. It therefore follows that the $\mathcal{D}$ and
$\mathcal{J}$ relations can be described in terms of projective
column spaces (or symmetrically, of projective row spaces).
To obtain such a description, we consider the natural distance function
$\delta : \hatr \times \hatr \to \mathbb{R} \cup \lbrace \infty
\rbrace$ defined by
$$\delta(x,y) = \begin{cases}
|y-x| & \text{ if } x, y \in \mathbb{R} \\
0 & \text{ if } x = y = -\infty \text{ or } x = y = \infty \\
\infty & \text{ otherwise.}
\end{cases}$$
The function $\delta$ satisfies $\delta(x,y) = 0$ if and only if $x
= y$. It is also symmetric and satisfies a triangle inequality when
the usual order on $\mathbb{R}$ is extended to $\mathbb{R} \cup
\lbrace \infty \rbrace$ in the obvious way. It is thus a metric,
except that it may take the value $\infty$, and so induces obvious
notions of \textit{isometric embedding} and \textit{isometry}
between subsets of $\hatr$. For $M, N \subseteq \hatr$ we write $M
\cong N$ to denote that $M$ and $N$ are isometric. Note that we do
\textit{not} require isometries or isometric embeddings to preserve
the orientation of $\hatr$, so for example $[-\infty, 0] \cong [0, \infty]$.

We define the \textit{diameter} $d(S)$ of a subset $S \subseteq \hatr$ (or of an
isometry type of subsets of $\hatr$) to be
$$d(S) = \sup_{x,y \in S} \delta(x,y)$$
where of course $0$ is the supremum of the empty set, and $\infty$ the supremum of
any set not bounded above by a real number.

We shall be
particularly interested in isometries and isometric embeddings
between closed convex subsets of $\hatr$, where a simple
combinatorial characterisation applies.
It is readily verified that
two distinct such sets are isometric if and only if (i) they are
both singletons, (ii) they are both closed intervals of
the same finite diameter, or (iii) they are both closed intervals with one real
endpoint and one endpoint at $\infty$ or $-\infty$. It is also easy to
check that isometric embedding induces a partial order on the closed
convex subsets (the only non-trivial part of this claim being that the order
is antisymmetric, that is, that two such sets which embed isometrically into
each other are necessarily isometric).

\begin{proposition}\label{prop_duality}
Let $A \in \mstwo$. Then $\prjc{A} \cong \prjr{A}$.
\end{proposition}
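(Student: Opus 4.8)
The plan is to reduce the statement to a short finite case analysis governed by the positions of the entries of $A$ equal to $-\infty$. First I would dispose of the trivial case: if $A$ is the zero matrix then $C(A)$ and $R(A)$ are both the zero cone, so $\prjc{A} = \prjr{A} = \emptyset$. Otherwise both $\prjc{A}$ and $\prjr{A}$ are non-empty, and since $C(A)$ and $R(A)$ are $2$-generated convex cones, each of $\prjc{A}$ and $\prjr{A}$ is a $2$-generated convex subset of $\hatr$, hence a singleton or a closed interval. By the combinatorial characterisation of isometry between closed convex subsets of $\hatr$ recalled above, it therefore suffices to verify in each case that $\prjc{A}$ and $\prjr{A}$ are either both singletons, or both closed intervals of the same finite diameter, or both closed intervals having exactly one endpoint in $\lbrace -\infty, \infty \rbrace$.

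The conceptual heart of the argument is the case in which all four entries $a,b,c,d$ of $A = \twomat{a}{b}{c}{d}$ are real. Here the description of projective column spaces from Section~\ref{sec_prelim} gives that $\prjc{A}$ is the closed interval with endpoints $c-a$ and $d-b$ (a singleton when these coincide), while dually $\prjr{A}$ is the closed interval with endpoints $b-a$ and $d-c$. The key identity is $(c-a)-(d-b) = b+c-a-d = (b-a)-(d-c)$: the two intervals have the same diameter $|\,b+c-a-d\,|$, and hence are isometric --- both singletons if this quantity is $0$, otherwise both closed intervals of equal finite diameter. (Informally, $|\,b+c-a-d\,|$ is a kind of ``singularity defect'' of $A$, and this identity says it is invariant under transposition.)

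It then remains to treat the matrices having at least one entry equal to $-\infty$. Using the symmetries afforded by swapping the two rows and/or the two columns of $A$ --- each of which alters $\prjc{A}$ and $\prjr{A}$ only by a reflection of $\hatr$, hence up to isometry --- one reduces to a short list of normal forms according to the number of $-\infty$ entries (a single real entry, which we may place in position $(1,1)$; a single $-\infty$ entry, in position $(2,2)$; the three two-real-entry patterns with the real entries on the main diagonal, in row $1$, or in column $1$; and the three-real-entry case treated above in the diagonal situation). Each normal form is then settled by direct inspection of the corresponding explicit description in Figure~\ref{fig:rclass}, comparing the interval or singleton obtained for the columns with that obtained for the rows.

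I do not anticipate a genuine obstacle; the only point requiring a little care is the bookkeeping of the degenerate cases, where a zero row or zero column forces one of the two projective spaces to collapse to a singleton and one must check that the other collapses as well (which it does --- a zero column makes the two rows projectivise to the same point of $\hatr$, and symmetrically). Beyond this, everything reduces to the single identity above together with routine verification.
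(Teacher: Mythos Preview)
Your proposal is correct and follows essentially the same approach as the paper: a direct case analysis computing $\prjc{A}$ and $\prjr{A}$ explicitly and comparing their isometry types. The paper organises its cases by the shape of $\prjc{A}$ (empty, singleton, finite interval, half-infinite interval, all of $\hatr$) and leans on the $\mathcal{R}$-class table in Figure~\ref{fig:rclass} throughout, whereas you organise by the pattern of $-\infty$ entries, exploit row/column-swap symmetries, and foreground the single identity $(c-a)-(d-b)=(b-a)-(d-c)$ as the invariant $|b+c-a-d|$; the substance is the same.
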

\begin{proof}
We proceed by case analysis, considering each possible form of
$\prjc{A}$. If $\prjc{A} = \emptyset$ then $A$ is the zero matrix so
$\prjr{A} = \emptyset$. If $\prjc{A} = \hatr$ then $A$ is a unit and
so $\prjr{X} = \hatr$.

If $\prjc{A} = \lbrace y \rbrace$ is a singleton then $A \in
R_{\lbrace y \rbrace}$ for some $y \in \hatr$. By reference to
Figure~\ref{fig:rclass} we see that $A$ has at least one non-zero
row $(a,b)$. It is then easy to verify (for example, by locating $A^T$
in Figure~\ref{fig:rclass}) that in each case $A^T \in
R_{\lbrace b-a \rbrace}$, where we again using the extended subtraction.
defined in Section~\ref{sec_prelim}. Thus, $PR(A) = PC(A^T) = \lbrace b-a \rbrace$
is isometric to $PC(A)$.

If $\prjc{A} = [x,y]$ is a closed interval with real endpoints then
using Figure~\ref{fig:rclass} once again we see that either
$$A = \twomat{a}{b}{a+x}{b+y} \text{ or } A = \twomat{b}{a}{b+y}{a+x},$$
where $a,b \in \mathbb{R}$. In the former case we have
$$A^T = \twomat{a}{(a+x)}{a+(b-a)}{(a+x)+(b-a+y-x)}$$
from which it follows that $A^T \in R_{[b-a,b-a+y-x]}$ and $\prjr{A}
= [b-a,b-a+y-x]$ is again a closed interval of diameter $y-x$ and hence
isometric to $\prjc{A}$. The latter case is similar, as are the
cases where one end of the interval is $\infty$ or $-\infty$.
\end{proof}

\begin{proposition}\label{prop_intersect}
Let $M$ and $N$ be closed convex subsets in $\hatr$, and suppose
$M \cong N$. Then there exists a matrix $Z \in \mstwo$ such that
$\prjc{Z} = M$ and $\prjr{Z} = N$
\end{proposition}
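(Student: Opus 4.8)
The plan is to explicitly construct a matrix $Z$ for each isometry type of closed convex subsets of $\hatr$, exploiting the classification of such isometry types recalled just before Proposition~\ref{prop_duality}: two distinct closed convex sets are isometric precisely when they are both singletons, both closed intervals of the same finite diameter, or both closed intervals with one real endpoint and one endpoint at $\pm\infty$. Since we are free to choose $Z$ up to $\mathcal{R}$- and $\mathcal{L}$-equivalence (these are exactly what $\prjc{Z}$ and $\prjr{Z}$ record, by Corollary~\ref{cor_rclasses}), it suffices to realise, for each such pair $(M,N)$, \emph{some} matrix whose column space projectivises to $M$ and whose row space projectivises to $N$. I would read off candidates directly from Figure~\ref{fig:rclass}.

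First I would dispose of the degenerate cases. If $M = N = \emptyset$, take $Z$ to be the zero matrix. If $M = N = \hatr$, take $Z$ to be the identity matrix (a unit), which has $\prjc{Z} = \prjr{Z} = \hatr$. If $M$ and $N$ are both singletons, say $M = \{y\}$ and $N = \{y'\}$ with $y, y' \in \hatr$, I would write down a matrix in $R_{\{y\}}$ whose transpose lies in $R_{\{y'\}}$: the computation inside the proof of Proposition~\ref{prop_duality} shows that the matrix $\twomat{a}{b}{a+y}{b+y}$ (with the obvious conventions when $y = \pm\infty$) has transpose in $R_{\{b-a\}}$, so choosing $a, b$ with $b - a = y'$ (again interpreting this via the extended subtraction when $y'$ is infinite, and handling the cases $y = \pm\infty$ separately using the corresponding rows of Figure~\ref{fig:rclass}) does the job.

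The main case is when $M$ and $N$ are closed intervals. Write $M = [x, y]$ and $N = [x', y']$ (allowing infinite endpoints), with $\delta$-diameters equal; by the classification this means $y - x = y' - x'$ when all endpoints are real, or else both intervals have exactly one infinite endpoint of the same type-count (one end at $+\infty$ or $-\infty$). The computation in the proof of Proposition~\ref{prop_duality} is reversible: it shows that $Z = \twomat{a}{b}{a+x}{b+y}$ satisfies $\prjc{Z} = [x,y]$ and $\prjr{Z} = [\,b-a,\; b-a+y-x\,]$. So in the all-real case I would solve $b - a = x'$ and $y - x = y' - x'$ for suitable real $a, b$ — the second equation is exactly the equal-diameter hypothesis, and the first fixes $b - a$ — giving the desired $Z$. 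When one endpoint of $M$ is infinite, say $M = [-\infty, y]$ with $y$ real, the corresponding row of Figure~\ref{fig:rclass} gives $Z = \twomat{a}{b}{-\infty}{b+y}$; a short transpose computation shows $\prjr{Z}$ is again a closed interval with one endpoint real and one at $\pm\infty$, and I would check that by scaling (i.e. varying $a, b$) one reaches any prescribed such interval $N$, again using that isometry for these intervals imposes no constraint on the finite endpoint.

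I expect the only real obstacle to be bookkeeping: there are several sub-cases according to which endpoints of $M$ and $N$ are infinite and of which sign, and one must confirm in each that the transpose formula produces an interval of the \emph{correct} isometry type and that the two free real parameters $a, b$ suffice to hit the prescribed finite endpoint(s). None of these sub-case verifications is deep — each is the kind of direct calculation already carried out in the proof of Proposition~\ref{prop_duality} — so I would present one representative interval case in full and remark that the remaining cases are analogous.
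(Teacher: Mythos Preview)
Your proposal is correct and follows essentially the same route as the paper: a case analysis over the isometry types of closed convex subsets of $\hatr$, in each case writing down an explicit matrix from the relevant row of Figure~\ref{fig:rclass} and verifying (via the transpose) that its projective row space is the prescribed $N$. The paper is slightly more explicit in the half-infinite interval case --- it splits $M=[-\infty,y]$ into the two sub-cases $N=[-\infty,z]$ and $N=[x,\infty]$ and gives a separate matrix for each, since varying $a,b$ in a single form of Figure~\ref{fig:rclass} only produces one orientation of half-infinite row space --- but you have already flagged this bookkeeping as the main obstacle, so your plan would uncover it.
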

\begin{proof}
Once again, the proof is by case analysis with reference to
Figure~\ref{fig:rclass}. If $M = \emptyset$ then $N = \emptyset$ and
it suffices to take $Z$ to be the zero matrix, while if $M = \hatr$
then $N = \hatr$ and we may take $Z$ to be the identity matrix.

Suppose now that $M = \lbrace x \rbrace$ is a singleton (with $x \in
\hatr$ either real or infinite). Then $N = \lbrace y \rbrace$ must
be a singleton too and by reference to Figure~\ref{fig:rclass} it is
seen that the matrices
$$A = \twomat{0}{y}{x}{x+y} \mbox{and } B = \twomat{-(x+y)}{-x}{-y}{0}$$
satisfy $A \in R_{\lbrace x \rbrace}$, $A^T \in R_{\lbrace y
\rbrace}$, for $x,y \neq \infty$ and $B \in R_{\lbrace x \rbrace}$,
$B^T \in R_{\lbrace y \rbrace}$, for $x,y \neq -\infty$. Similarly,
the matrices
$$X =\twomat{-\infty}{-\infty}{0}{-\infty}, Y =\twomat{-\infty}{0}{-\infty}{-\infty}$$
satisfy $X \in R_{\lbrace \infty \rbrace}$, $X^T \in R_{\lbrace
-\infty \rbrace}$ and $Y \in R_{\lbrace -\infty \rbrace}$, $Y^T \in
R_{\lbrace \infty \rbrace}$. Thus, for every pair $(x,y) \in \hatr
\times \hatr$ there exists a matrix $Z$ satisfying $\prjc{Z} =
\lbrace x \rbrace$ and $\prjr{Z} = \prjc{Z^T} = \lbrace y \rbrace$
as required.

Next suppose $M = [x,y]$ is an interval with real endpoints. Then $N
= [w,z]$ must be an interval with real endpoints satisfying $z-w =
y-x$ so that $w+y = x+z$. Now consider the matrix
$$Z = \twomat{0}{w}{x}{w+y} = \twomat{0}{w}{x}{x+z}.$$
Referring once more to Figure~\ref{fig:rclass} we see that $Z \in
R_{[x,y]}$ while $Z^T \in R_{[w,z]}$ so that $\prjc{Z} = M$
and $\prjr{Z} = \prjc{Z^T} = N$ as required.

Now consider the case that $M = [-\infty, y]$ with $y$ real. Then
either $N = [-\infty, z]$ with $z$ real, or $N = [x, \infty]$ with
$x$ real. In the former case it suffices to take the matrix
$$Z = \twomat{0}{z}{y}{-\infty},$$
while in the latter case one considers
$$Z = \twomat{0}{x}{-\infty}{x+y}.$$
In both cases, reference to Figure~\ref{fig:rclass} once more establishes
that the given matrix has the correct column and row spaces.

Finally, an argument entirely similar to the previous one applies in
the case that $M = [y, \infty]$ with $y$ real, and hence completes
the proof.
\end{proof}

\begin{theorem}\label{thm_jorder}
Let $A,B \in \mstwo$. Then the following are equivalent:
\begin{itemize}
\item[(i)] $A \leqj B$;
\item[(ii)] $\prjc{A}$ embeds isometrically in $\prjc{B}$;
\item[(iii)] $\prjr{A}$ embeds isometrically in $\prjr{B}$.
\end{itemize}
\end{theorem}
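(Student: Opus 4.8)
The plan is to prove (ii)~$\Leftrightarrow$~(iii) directly from duality, and then (i)~$\Rightarrow$~(ii) and (ii)~$\Rightarrow$~(i), using throughout Lemma~\ref{lemma_rclasses}, Corollary~\ref{cor_rclasses}, and Propositions~\ref{prop_duality} and~\ref{prop_intersect}. For (ii)~$\Leftrightarrow$~(iii): by Proposition~\ref{prop_duality} we have $\prjc{A} \cong \prjr{A}$ and $\prjc{B} \cong \prjr{B}$, and whether one set embeds isometrically in another is unchanged when either is replaced by an isometric copy (compose the embedding with the relevant isometries), so (ii) and (iii) are equivalent.

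For (i)~$\Rightarrow$~(ii): since $\mstwo$ is a monoid, $A \leqj B$ means $A = XBY$ for some $X, Y \in \mstwo$; putting $C = XB$ we obtain $A \leqr C$ and $C \leql B$, so by Lemma~\ref{lemma_rclasses} we get $\prjc{A} \subseteq \prjc{C}$ and $\prjr{C} \subseteq \prjr{B}$. Now the inclusion $\prjc{A} \hookrightarrow \prjc{C}$ is an isometric embedding (the metric is just restricted), $\prjc{C}$ is isometric to $\prjr{C}$ by Proposition~\ref{prop_duality}, the inclusion $\prjr{C} \hookrightarrow \prjr{B}$ is an isometric embedding, and $\prjr{B}$ is isometric to $\prjc{B}$ by Proposition~\ref{prop_duality}. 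Composing these four isometric embeddings shows that $\prjc{A}$ embeds isometrically in $\prjc{B}$.

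For (ii)~$\Rightarrow$~(i): assume $\prjc{A}$ embeds isometrically in $\prjc{B}$. I claim $\prjc{B}$ contains a closed convex subset $N$ with $N \cong \prjc{A}$; granting this, Proposition~\ref{prop_duality} gives $N \cong \prjc{A} \cong \prjr{A}$, so Proposition~\ref{prop_intersect} yields a matrix $Z \in \mstwo$ with $\prjc{Z} = N$ and $\prjr{Z} = \prjr{A}$. Then $\prjc{Z} = N \subseteq \prjc{B}$ gives $Z \leqr B$ by Lemma~\ref{lemma_rclasses}, so $Z = BY$ for some $Y$, while $\prjr{Z} = \prjr{A}$ gives $Z\,\mathcal{L}\,A$ by Corollary~\ref{cor_rclasses}, so $A = XZ$ for some $X$; hence $A = XZ = XBY$ and $A \leqj B$.

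The main obstacle is the claim just used, that an isometric embedding $\prjc{A} \to \prjc{B}$ forces $\prjc{B}$ to contain a closed convex subset isometric to $\prjc{A}$: one cannot simply take the image, since an isometric embedding of a closed convex subset of $\hatr$ into $\hatr$ need not have closed convex image. However, this follows from the classification recorded before Proposition~\ref{prop_duality} by a short case analysis over the five isometry types of closed convex subsets (empty set, singleton, finite closed interval, closed interval with exactly one infinite endpoint, and $\hatr$): in each case in which such an embedding can exist, a closed convex subset of $\prjc{B}$ of the appropriate isometry type is exhibited explicitly. The remaining steps are routine chaining of the cited results.
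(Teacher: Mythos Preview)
Your proof is correct and follows essentially the same route as the paper's: the same chain of isometric embeddings via $C = XB$ for (i)~$\Rightarrow$~(ii), and the same construction of an intermediary $Z$ via Proposition~\ref{prop_intersect} for (ii)~$\Rightarrow$~(i). The one difference is that the paper simply takes $M$ to be the image of the given embedding and asserts it is ``clearly a closed convex set'', whereas you (rightly) observe that an arbitrary isometric embedding of a closed convex subset of $\hatr$ need not have closed convex image and instead supply the required $N \subseteq \prjc{B}$ by a short case analysis on isometry types---so your treatment of this step is in fact more careful than the paper's.
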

\begin{proof}
The equivalence of (ii) and (iii) follows from Proposition~\ref{prop_duality}.

Suppose next that (i) holds, and let $X, Y \in \mstwo$ be such that
$A = XBY$. Then $A = XBY \leqr XB$ so by Lemma~\ref{lemma_rclasses},
$\prjc{A} \subseteq \prjc{XB}$, and in particular $\prjc{A}$ embeds
isometrically in $\prjc{XB}$. Similarly, $XB \leql B$ so by
Lemma~\ref{lemma_rclasses}, $\prjr{XB}$ embeds isometrically in
$\prjr{B}$. Now, by Proposition~\ref{prop_duality}, $\prjc{XB} \cong
\prjr{XB}$ and $\prjr{B} \cong \prjc{B}$, so by transitivity of
isometric embedding we conclude that $\prjc{A}$ embeds isometrically
in $\prjc{B}$ and (ii) holds.

Finally, suppose (ii) holds.
Let $M \subseteq \prjc{B}$ be the image of an isometric embedding of
$\prjc{A}$ into $\prjc{B}$. Then $M$ is clearly a closed
convex set isometric to $\prjc{A}$ which by
Proposition~\ref{prop_duality} is also isometric to $\prjr{A}$.
Hence, by Proposition~\ref{prop_intersect}, there is a matrix $Z \in
\mstwo$ such that $\prjc{Z} = M \subseteq \prjc{B}$ and $\prjr{Z} =
\prjr{A}$. But now by Corollary~\ref{cor_rclasses} and
Lemma~\ref{lemma_rclasses} we have $A \mathcal{L} Z$ and $Z \leqr
B$, from which it follows that $A \leqj B$.
\end{proof}

\begin{theorem}\label{thm_dj}
Let $A, B \in \mstwo$. Then the following are equivalent:
\begin{itemize}
\item[(i)] $A \mathcal{D} B$;
\item[(ii)] $A \mathcal{J} B$;
\item[(iii)] $\prjc{A} \cong \prjc{B}$;
\item[(iv)] $\prjr{A} \cong \prjr{B}$.
\end{itemize}
\end{theorem}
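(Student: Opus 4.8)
The plan is to prove the chain of implications (iii) $\Rightarrow$ (ii) $\Rightarrow$ (i) $\Rightarrow$ (iii), using the already-established characterisation of the $\mathcal{J}$-preorder in Theorem~\ref{thm_jorder} together with the antisymmetry of isometric embedding on closed convex sets (noted just before Proposition~\ref{prop_duality}); the equivalence of (iii) and (iv) is then immediate from Proposition~\ref{prop_duality}.

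First I would observe that $\mathcal{D} \subseteq \mathcal{J}$ holds in any monoid, so (i) $\Rightarrow$ (ii) requires no work. For (ii) $\Rightarrow$ (iii), suppose $A \mathcal{J} B$, that is $A \leqj B$ and $B \leqj A$. By Theorem~\ref{thm_jorder}, $\prjc{A}$ embeds isometrically in $\prjc{B}$ and $\prjc{B}$ embeds isometrically in $\prjc{A}$. Since both projective column spaces are closed convex subsets of $\hatr$, and isometric embedding is a partial order on such sets (the antisymmetry being exactly the cited fact), we conclude $\prjc{A} \cong \prjc{B}$, giving (iii).

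The remaining implication (iii) $\Rightarrow$ (i) is the only one with real content. Assume $\prjc{A} \cong \prjc{B}$. By Proposition~\ref{prop_duality}, $\prjc{A} \cong \prjr{A}$, so $\prjr{A} \cong \prjc{B}$. Now apply Proposition~\ref{prop_intersect} with $M = \prjc{B}$ and $N = \prjr{A}$: since $M \cong N$, there is a matrix $Z \in \mstwo$ with $\prjc{Z} = \prjc{B}$ and $\prjr{Z} = \prjr{A}$. By Corollary~\ref{cor_rclasses}, $\prjc{Z} = \prjc{B}$ gives $Z \mathcal{R} B$, and $\prjr{Z} = \prjr{A}$ gives $Z \mathcal{L} A$, i.e. $A \mathcal{L} Z \mathcal{R} B$. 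Hence $A \mathcal{D} B$ by the standard characterisation of $\mathcal{D}$ recalled at the start of this section, establishing (i).

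I do not expect any serious obstacle here, since all the geometric and computational work has been front-loaded into Propositions~\ref{prop_duality} and~\ref{prop_intersect} and Theorem~\ref{thm_jorder}. The one point to handle with a little care is the antisymmetry of isometric embedding used in (ii) $\Rightarrow$ (iii); but this is precisely the combinatorial fact asserted (and there deemed "the only non-trivial part") in the paragraph preceding Proposition~\ref{prop_duality}, so I would simply invoke it. It is also worth noting explicitly that the equality $\mathcal{D} = \mathcal{J}$ obtained here is slightly special: it does not hold in $\msn$ for general $n$, and in the $2 \times 2$ case it is forced by the fact that the closed convex sets in $\hatr$ are well-behaved enough under isometry.
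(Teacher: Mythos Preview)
Your argument is correct and follows essentially the same route as the paper: the equivalence of (iii) and (iv) via Proposition~\ref{prop_duality}, the trivial (i) $\Rightarrow$ (ii), the step (ii) $\Rightarrow$ (iii) via Theorem~\ref{thm_jorder} and antisymmetry of isometric embedding, and the key (iii) $\Rightarrow$ (i) via Propositions~\ref{prop_duality} and~\ref{prop_intersect} together with Corollary~\ref{cor_rclasses}. Two small remarks: your opening sentence misstates the cycle as (iii) $\Rightarrow$ (ii) $\Rightarrow$ (i) $\Rightarrow$ (iii) whereas you actually (and correctly) prove (i) $\Rightarrow$ (ii) $\Rightarrow$ (iii) $\Rightarrow$ (i); and the closing assertion that $\mathcal{D} \neq \mathcal{J}$ in $\msn$ for general $n$ is extraneous to the proof and should not be stated without justification.
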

\begin{proof}
The equivalence of (iii) and (iv) follows from Proposition~\ref{prop_duality}.
That (i) implies (ii) follows from general facts about semigroups, while
the fact that (ii) implies (iii) is a corollary of Theorem~\ref{thm_jorder}.

Finally, if (iii) holds then by Proposition~\ref{prop_duality} we
have $\prjr{A} \cong \prjc{A} \cong \prjc{B}$, so by
Proposition~\ref{prop_intersect} there is a matrix $Z \in \mstwo$
such that $\prjc{Z} = \prjc{B}$ and $\prjr{Z} = \prjr{A}$. By
Corollary~\ref{cor_rclasses} it follows that $B \mathcal{R} Z$ and
$Z \mathcal{L} A$. Since $\mathcal{D}$ is an equivalence relation
containing $\mathcal{L}$ and $\mathcal{R}$ we conclude that $X
\mathcal{D} Y$ so that (i) holds.
\end{proof}

Theorems~\ref{thm_jorder} and \ref{thm_dj} allows us to deduce a great
deal about the two-sided ideal structure of $\mstwo$. An immediate
corollary is a description of the lattice order on the two-sided principal
ideals (or equivalently, on the $\mathcal{J}$-classes).

\begin{corollary}
The lattice of principal two-sided ideals in $\mstwo$ is isomorphic
to the lattice of isometry types of closed convex subsets of $\hatr$
under the partial order given by isometric embedding.
\end{corollary}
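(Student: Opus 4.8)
The plan is to obtain this as a formal consequence of Theorems~\ref{thm_jorder} and~\ref{thm_dj}, together with the standard dictionary relating principal two-sided ideals to $\mathcal{J}$-classes. Recall first that in any monoid $S$ the principal two-sided ideal generated by an element $A$ is $SAS$, and that $A \leqj B$ means precisely $SAS \subseteq SBS$; consequently $A \mapsto SAS$ induces a bijection between the $\mathcal{J}$-classes of $S$ and its principal two-sided ideals, under which the partial order induced on $\mathcal{J}$-classes by $\leqj$ corresponds exactly to inclusion of ideals. So it suffices to produce an order-isomorphism from the poset of $\mathcal{J}$-classes of $\mstwo$ (under the $\leqj$-order) to the poset of isometry types of closed convex subsets of $\hatr$ (under isometric embedding).

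To this end, define a map $\phi$ sending the $\mathcal{J}$-class of $A$ to the isometry type of $\prjc{A}$. By Theorem~\ref{thm_dj} this is well defined and injective, since two matrices are $\mathcal{J}$-related if and only if their projective column spaces are isometric. It is surjective because every closed convex subset $M \subseteq \hatr$ arises as a projective column space: this is immediate from Proposition~\ref{prop_intersect} applied with $N = M$ (as $M \cong M$), and is also visible directly from the list of $\mathcal{R}$-classes in Figure~\ref{fig:rclass}. Finally, Theorem~\ref{thm_jorder} says that $A \leqj B$ holds exactly when $\prjc{A}$ embeds isometrically into $\prjc{B}$, so both $\phi$ and its inverse preserve order; hence $\phi$ is the required order-isomorphism, and combined with the first paragraph this identifies the poset of principal two-sided ideals of $\mstwo$ with the poset of isometry types of closed convex subsets of $\hatr$.

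It remains only to justify the word \emph{lattice}, and since the two posets are order-isomorphic it is enough to check that one of them is a lattice. In fact the poset of isometry types of closed convex subsets of $\hatr$ is a chain: up to isometry these subsets are the empty set, a singleton, a closed interval of diameter $r$ for each real $r > 0$, a closed interval with one real and one infinite endpoint (all of these mutually isometric, e.g. $[-\infty,0] \cong [0,\infty]$), and $\hatr$ itself, and a short check with $\delta$ shows that each embeds isometrically into the next in this listing while none embeds into an earlier one — the only slightly delicate non-embedding being that $\hatr$ does not embed into $[0,\infty]$, because $\hatr$ contains three points ($-\infty$, $0$, $\infty$) at pairwise $\delta$-distance $\infty$ whereas $[0,\infty]$ contains no such triple. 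A totally ordered set is a lattice, with meet and join the smaller and larger of two elements, and transporting this across $\phi$ completes the proof. There is no genuine obstacle here beyond assembling the earlier results; the only point needing a little care is this last verification that the relevant poset is actually a lattice, which reduces to the elementary observation that it is a chain.
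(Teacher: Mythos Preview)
Your proof is correct and follows the same approach the paper intends: the corollary is presented there as an immediate consequence of Theorems~\ref{thm_jorder} and~\ref{thm_dj} via the standard correspondence between $\mathcal{J}$-classes and principal two-sided ideals, and you have simply spelled this out carefully. Your additional verification that the poset is actually a lattice (indeed a chain) is a nice touch that goes beyond what the paper does at this point, and it anticipates Corollary~\ref{cor_totalorder}, which the paper establishes only afterwards.
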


We now turn our attention to non-principal ideals, which it transpires
can also be characterized by convex sets in $\hatr$. Let $\mathbb{S}$ be
the set of convex sets in $\hatr$ consisting of all the closed convex
sets, all the open intervals of finite diameter,
and the open interval $(-\infty, \infty)$. Note that we exclude the
half-infinite open intervals. Once again, it is easily seen
that isometric embedding induces a partial order on the isometry types of
sets in $\mathbb{S}$. Note also that no two isometry types of sets in
$\mathbb{S}$ admit
isometric embeddings of exactly the same collection of closed convex
sets.

\begin{theorem}\label{thm_allideals}
Let $I$ be an ideal of $\mstwo$. Then there exists a subset $I' \in \mathbb{S}$
such that for all $X \in \mstwo$ we have $X \in I$ if
and only if the projective column space of $X$ embeds isometrically
into $I'$. Moreover, the set $I'$ is unique up to isometry.
\end{theorem}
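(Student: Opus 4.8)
The plan is to extract $I'$ from the $\mathcal{J}$-class structure. Since every ideal $I$ is a union of $\mathcal{J}$-classes, and by Theorem~\ref{thm_dj} each $\mathcal{J}$-class corresponds to an isometry type of closed convex subset of $\hatr$, the ideal $I$ determines a downward-closed (in the isometric embedding order, by Theorem~\ref{thm_jorder}) set $\mathcal{C}_I$ of isometry types of closed convex sets. The task is to show $\mathcal{C}_I$ is always exactly the set of closed convex sets embedding isometrically into some single $I' \in \mathbb{S}$. First I would observe that the isometry types of closed convex sets are parametrised very simply: the empty set; the singleton; for each $r \in (0,\infty)$ a closed interval of diameter $r$; the isometry type of a half-infinite closed interval (e.g. $[-\infty,0]$); and the isometry type of $\hatr$ itself. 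So a downward-closed set of such types, ordered by embedding, is essentially a downward-closed subset of a linearly(ish) ordered parameter space $\{\emptyset\} < \{pt\} < \{(0,\infty)\text{-indexed intervals}\} < \{\text{half-infinite}\} < \{\hatr\}$, with the only subtlety being how far up the $(0,\infty)$ scale of finite diameters the set reaches and whether it then includes the half-infinite type and/or $\hatr$.

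Next I would do the case analysis on $\mathcal{C}_I$. If $I$ is empty, take $I' = \emptyset$. If $I$ contains a matrix of non-empty column space but all column spaces in $I$ are singletons, take $I'$ a singleton. Otherwise let $d = \sup\{\, d(\prjc{X}) : X \in I \,\} \in (0,\infty]$ be the supremal diameter occurring. If $d = \infty$ but no $X \in I$ has a half-infinite or full projective column space, take $I' = (-\infty,\infty)$: a closed interval embeds isometrically in $(-\infty,\infty)$ iff it has finite diameter or is a singleton, which matches. If some $X \in I$ has half-infinite column space but none has full column space $\hatr$, then by Theorem~\ref{thm_jorder} every closed interval of finite diameter embeds (being $\leqj X$), so take $I'$ a half-infinite closed interval. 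If some $X \in I$ has $\prjc{X} = \hatr$, take $I' = \hatr$. The remaining case is $d$ finite and attained or not: if the supremum $d$ is attained by some $X \in I$, take $I' = [0,d]$; if not, take $I' = (0,d)$ — note here that an open interval $(0,d)$ admits isometric embeddings of exactly the closed intervals of diameter strictly less than $d$ (plus singletons), which is what "$\mathcal{C}_I$ contains all closed intervals of diameter $< d$ but none of diameter $d$" says. This is exactly why $\mathbb{S}$ was defined to include the finite open intervals but exclude half-infinite ones: a half-infinite \emph{open} interval would admit the same closed convex sets as a half-infinite closed interval, so it is redundant, whereas a half-infinite \emph{closed} interval is genuinely needed and cannot be replaced by $(-\infty,\infty)$.

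In each case I must verify both directions of the biconditional "$X \in I \iff \prjc{X}$ embeds isometrically into $I'$". The forward direction follows because, by the definition of $d$ and the case hypotheses, $\prjc{X}$ always embeds into the chosen $I'$. The reverse direction is where Theorem~\ref{thm_jorder} does the real work: if $\prjc{X}$ embeds isometrically into $I'$, I need to exhibit $Y \in I$ with $\prjc{X}$ embedding isometrically into $\prjc{Y}$, for then $X \leqj Y$ by Theorem~\ref{thm_jorder}, and since $I$ is an ideal $X \in I$. In the "supremum attained" cases the witnessing $Y$ is the matrix achieving the sup. In the $I' = (0,d)$ case, $\prjc{X}$ has diameter $r < d$, so by definition of $d$ as a supremum there is $Y \in I$ with $d(\prjc{Y}) > r$, and a closed interval of diameter $r$ embeds isometrically in one of strictly larger diameter (it also embeds in a singleton only if $r=0$); the half-infinite and $(-\infty,\infty)$ cases are analogous using that finite-diameter closed intervals embed into half-infinite closed intervals and into $(-\infty,\infty)$.

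Finally, uniqueness up to isometry: $I$ determines the set $\mathcal{C}_I$ of isometry types of closed convex sets embedding into $I'$, and by the remark immediately preceding the theorem statement, no two distinct isometry types of members of $\mathbb{S}$ admit isometric embeddings of exactly the same collection of closed convex sets; hence $I'$ is determined up to isometry. The main obstacle I anticipate is not any single deep step but getting the case analysis genuinely exhaustive and correctly aligned with the somewhat delicately chosen definition of $\mathbb{S}$ — in particular making sure the "supremum not attained / finite" case lands on an open interval and the "diameter infinite but column spaces all bounded intervals" case lands on $(-\infty,\infty)$ rather than on a half-infinite interval, and double-checking that these are the only ways an ideal can fail to be principal.
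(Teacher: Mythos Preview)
Your approach is essentially the same as the paper's: both extract from $I$ the set of isometry types of closed convex sets arising as projective column spaces, argue via Theorem~\ref{thm_jorder} that this set is downward closed in the embedding order, and then perform a case analysis on its supremum to select the appropriate $I' \in \mathbb{S}$, with uniqueness coming from the remark preceding the theorem. The paper organises the cases slightly more economically---it simply asks whether the set of types has a maximal element (covering at once your $\emptyset$, singleton, attained-$[0,d]$, half-infinite, and $\hatr$ cases), and if not, whether the finite diameters are bounded---but the substance is identical; one small slip to fix is your phrase ``If $I$ is empty'': an ideal is non-empty and always contains the zero matrix, so what you presumably mean is the case $I = \{0\}$, where indeed $I' = \emptyset$ works.
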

\begin{proof}
Let $I$ be an ideal of $\mstwo$, and let $T$ be the set of all isometry
types of closed convex sets in $\hatr$ which arise as
projective column spaces (or equivalently, projective row spaces) of
matrices in $I$. If $T$ has a maximal element under the isometric embedding
order, then it follows from Theorem~\ref{thm_jorder} that it suffices to take
$I'$ to be this convex set.

Suppose, then, that $T$ has no maximal element. Then clearly it cannot
contain the isometry type of a convex set of infinite diameter (since
there are only finitely many such up to isometry, and they are above all
other convex sets in the isometric embedding order), but must contain
infinitely many intervals of finite diameter. If the diameters of these
intervals are bounded above by a real number, then we let $w$ be the supremum of
the diameters. Since $T$ has no maximal element, this supremum is not
attained in $T$. It follows from Theorem~\ref{thm_jorder} that a matrix lies
in $I$ if and only if its projective column space has diameter strictly less than
$w$. This is the case exactly if the projective column space embeds isometrically
in an open interval of diameter $w$, so it suffices to take $I'$ to be
such an interval.

On the other hand, if the diameters of the intervals are not bounded
above then, by Theorem~\ref{thm_jorder} again, we see that $I$ contains
every matrix with projective row space of finite diameter, and it follows
that we may take $I'$ to be the open interval $(-\infty, \infty)$.

Finally, the uniqueness up to isometry of $I'$ follows from Theorem~\ref{thm_jorder} and
the fact that no two distinct isometry types of sets in $\mathbb{S}$ embed exactly
the same closed convex sets.
\end{proof}

For $I$ an ideal of $\mstwo$, we denote by $S(I)$ the unique convex
subset $S(I) \in \mathbb{S}$ such that $I$ consists of those matrices
with projective column space which embeds isometrically in $S(I)$.

\begin{corollary}\label{cor_totalorder}
The two-sided ideals of $\mstwo$ are totally ordered under inclusion.
\end{corollary}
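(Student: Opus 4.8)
The plan is to reduce the statement to the elementary observation that the down-sets of a totally ordered set are themselves totally ordered by inclusion. The bridge is Theorem~\ref{thm_allideals}: every ideal $I$ of $\mstwo$ is completely determined by the isometry type of the set $S(I)\in\mathbb{S}$, in that $X\in I$ precisely when $\prjc{X}$ embeds isometrically into $S(I)$. Since (by the correspondence between $\mathcal{R}$-classes and $2$-generated convex cones used throughout this section) \emph{every} closed convex subset of $\hatr$ arises as $\prjc{X}$ for some $X\in\mstwo$, it will follow that containment of ideals can be read off purely from the isometry types of closed convex subsets of $\hatr$ involved.

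The first step is to check that isometric embedding makes the set $P$ of isometry types of closed convex subsets of $\hatr$ into a \emph{chain}. The nonempty closed convex subsets of $\hatr$ are the singletons and the closed intervals; the isometry type of such a set is determined by whether it is a singleton, a closed interval of finite diameter (and in that case by the diameter), a closed interval with exactly one endpoint in $\{-\infty,\infty\}$, or the whole of $\hatr$. Using the combinatorial description of isometric embeddings of closed convex sets recalled just before Proposition~\ref{prop_duality}, one sees at once that these isometry types are linearly ordered, namely $\emptyset < \{0\} < [0,d] < [0,d'] < \cdots < [-\infty,0] < \hatr$ with $d<d'$ ranging over $(0,\infty)$.

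Next I would attach to each ideal $I$ the subset $D(I)\subseteq P$ consisting of the isometry types of closed convex subsets of $\hatr$ that embed isometrically into $S(I)$; by Theorem~\ref{thm_allideals} this is exactly the set of isometry types occurring as $\prjc{X}$ for $X\in I$, and by transitivity of isometric embedding it is a down-set of the chain $P$. I claim $I\mapsto D(I)$ is an order-embedding of $(\{\text{ideals of }\mstwo\},\subseteq)$ into $(\{\text{down-sets of }P\},\subseteq)$. Indeed, if $I\subseteq J$ and $[C]\in D(I)$, pick $X$ with $\prjc{X}=C$; since $C$ embeds isometrically into $S(I)$, Theorem~\ref{thm_allideals} gives $X\in I\subseteq J$, so $[C]\in D(J)$. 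Conversely, if $D(I)\subseteq D(J)$ and $X\in I$, then $[\prjc{X}]\in D(I)\subseteq D(J)$, so $\prjc{X}$ embeds isometrically into $S(J)$ and hence $X\in J$.

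Finally, the down-sets of a chain are totally ordered by inclusion: if $D_1,D_2$ are down-sets of $P$ and $x\in D_1\setminus D_2$, then every $y\in D_2$ satisfies $y\le x$ (otherwise $x<y$ together with $y\in D_2$ would force $x\in D_2$), hence $y\in D_1$, so $D_2\subseteq D_1$. Transporting this back along the order-embedding of the previous step shows that the ideals of $\mstwo$ are totally ordered by inclusion. The one genuinely delicate point — and where a naive argument fails — is the chain property in the second step: the sets $S(I)$ \emph{themselves} are not totally ordered under isometric embedding (for instance, neither of $\hatr$ and $(-\infty,\infty)$ embeds isometrically into the other), so one must work with the chain $P$ of \emph{closed} convex subsets and with the down-sets $D(I)$, rather than directly with $\mathbb{S}$ and isometric embedding of the $S(I)$.
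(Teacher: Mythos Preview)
Your proof is correct, and in fact more careful than the paper's own argument. The paper disposes of the corollary in one line: it ``follows immediately from Theorem~\ref{thm_allideals}, and the obvious fact that the isometry types of sets in $\mathbb{S}$ are totally ordered under isometric embedding.'' You are right to be suspicious of this ``obvious fact''; as literally stated it is false. For instance, the half-infinite closed interval $[-\infty,0]$ and the open interval $(-\infty,\infty)$ both lie in $\mathbb{S}$, yet neither embeds isometrically into the other: the former contains a point at infinite distance from all others, which cannot land in $\mathbb{R}$, while an isometric copy of $\mathbb{R}$ cannot sit inside $(-\infty,0]$ and cannot make use of the isolated point $-\infty$. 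Your detour through the chain $P$ of isometry types of \emph{closed} convex sets and the associated down-sets $D(I)$ is exactly what is needed to make the argument watertight, since membership in an ideal is tested only against closed convex sets, and those types \emph{are} linearly ordered. One small correction to your closing aside: your chosen incomparable pair $\hatr$ and $(-\infty,\infty)$ does not work, since $(-\infty,\infty)=\mathbb{R}$ embeds isometrically into $\hatr$ via the inclusion; the genuine incomparability in $\mathbb{S}$ is between $[-\infty,0]$ and $(-\infty,\infty)$, as above. This does not affect the validity of your proof, only the illustrative example.
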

\begin{proof}
This follows immediately from Theorem~\ref{thm_allideals}, and the obvious
fact that the isometry types of sets in $\mathbb{S}$ are totally ordered under isometric
embedding.
\end{proof}

\begin{corollary}\label{cor_closedprincipalfg}
Let $I$ be an ideal in $\mstwo$. Then the following are equivalent:
\begin{itemize}
\item[(i)] $S(I)$ is closed;
\item[(ii)] $I$ is principal;
\item[(iii)] $I$ is finitely generated.
\end{itemize}
\end{corollary}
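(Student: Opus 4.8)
The plan is to establish the cycle of implications (i) $\Rightarrow$ (ii) $\Rightarrow$ (iii) $\Rightarrow$ (i), using as the main tools the correspondence between ideals and convex subsets of $\hatr$ from Theorem~\ref{thm_allideals}, the characterisation of the $\mathcal{J}$-preorder in Theorem~\ref{thm_jorder}, and the total order on ideals from Corollary~\ref{cor_totalorder}.

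For (i) $\Rightarrow$ (ii), suppose $S(I)$ is closed. Since every closed convex subset of $\hatr$ arises as the projective column space of some matrix (by the bijection underlying Corollary~\ref{cor_rclasses} and Figure~\ref{fig:rclass}, or by Proposition~\ref{prop_intersect} applied with $M = N = S(I)$), we may choose $Z \in \mstwo$ with $\prjc{Z} = S(I)$. As $\mstwo$ is a monoid, the principal two-sided ideal generated by $Z$ is $\{X \in \mstwo : X \leqj Z\}$, and by Theorem~\ref{thm_jorder} this consists exactly of those $X$ whose projective column space embeds isometrically in $\prjc{Z} = S(I)$ --- which by the defining property of $S(I)$ is precisely the membership condition for $I$. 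Hence $I$ is the principal ideal generated by $Z$.

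The implication (ii) $\Rightarrow$ (iii) is trivial. For (iii) $\Rightarrow$ (i), write $I = \bigcup_{i=1}^{k} \mstwo A_i \mstwo$ for finitely many generators $A_1, \dots, A_k$. Each $\mstwo A_i \mstwo$ is an ideal, and by Corollary~\ref{cor_totalorder} these are totally ordered by inclusion, so one of them, say $\mstwo A_j \mstwo$, contains all the others; thus $I = \mstwo A_j \mstwo$ is principal. Arguing as above, $X \in I$ if and only if $\prjc{X}$ embeds isometrically in $\prjc{A_j}$, and $\prjc{A_j}$ is a $2$-generated, hence closed, convex subset of $\hatr$ lying in $\mathbb{S}$. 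By the uniqueness clause of Theorem~\ref{thm_allideals} --- that no two distinct isometry types of sets in $\mathbb{S}$ admit isometric embeddings of exactly the same closed convex sets --- it follows that $S(I) \cong \prjc{A_j}$, so $S(I)$ is closed.

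I do not expect a genuine obstacle, since the substantive work is already contained in Theorems~\ref{thm_jorder} and~\ref{thm_allideals} and Corollary~\ref{cor_totalorder}; the one point needing slight care is the last step, where one must verify that closedness of $S(I)$ is actually \emph{forced} rather than merely consistent, and for this the uniqueness part of Theorem~\ref{thm_allideals} (rather than mere existence of $S(I)$) is what does the job.
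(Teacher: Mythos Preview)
Your proof is correct and follows essentially the same route as the paper: (i) $\Rightarrow$ (ii) via realising the closed set as a projective column space and applying Theorem~\ref{thm_jorder}; (ii) $\Rightarrow$ (iii) trivially; and (iii) $\Rightarrow$ (i) by using Corollary~\ref{cor_totalorder} to extract a single generator whose (closed) projective column space must agree with $S(I)$. The only cosmetic difference is that in the last step the paper phrases the total order at the level of column spaces rather than principal ideals, and dismisses the identification with $S(I)$ as ``an easy argument'', whereas you spell it out via the uniqueness clause of Theorem~\ref{thm_allideals}; both amount to the same thing.
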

\begin{proof}
By Proposition~\ref{prop_intersect}, every closed convex set is the
projective column space of some matrix in $\mstwo$, so that (i) implies
(ii) follows from Theorem~\ref{thm_jorder}.
That (ii) implies (iii) is by definition. Finally, suppose (iii) holds
let $G$ be a finite generating set for $I$, and let $S = \lbrace PC(X) \mid X \in G \rbrace$.
By Corollary~\ref{cor_totalorder}, $S$ is totally ordered under
isometric embedding, and since it is finite, it must contain a maximum
element. This maximum element is a closed convex set, and an easy argument now
shows that it must be equal to $S(I)$.
\end{proof}

The equivalence of (ii) and (iii) in Corollary~\ref{cor_closedprincipalfg}
may be viewed as a manifestation of the fact that every finitely generated
tropical convex cone in $\barr^2$ is closed.

\begin{corollary}
Every ideal in $\mstwo$ is either principal, or the difference between
a principal ideal and its generating $\mathcal{J}$-class.
\end{corollary}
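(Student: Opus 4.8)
The plan is to reduce, via Corollary~\ref{cor_closedprincipalfg} and Theorem~\ref{thm_allideals}, to a short case analysis. If $I$ is principal there is nothing to prove; otherwise $S(I)$ is not closed, and by the definition of $\mathbb{S}$ this means that $S(I)$ is, up to isometry, either an open interval of some finite diameter $d>0$ or the open interval $(-\infty,\infty)$. In each case I will name an explicit matrix $Z\in\mstwo$, let $J$ be the principal ideal generated by $Z$, and check that $I=J\setminus J_Z$, where $J_Z$ denotes the $\mathcal{J}$-class of $Z$. Since $J_Z$ is the maximum $\mathcal{J}$-class of $J$, the difference $J\setminus J_Z$ is automatically a (two-sided) ideal, so verifying the set equality is all that is required.

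Suppose first that $S(I)$ is an open interval of diameter $d$. Take $Z$ with $\prjc{Z}$ equal to a closed interval of diameter $d$ — such a $Z$ exists by Proposition~\ref{prop_intersect}, or may be read off from Figure~\ref{fig:rclass}. By Theorem~\ref{thm_jorder}, $J$ is the set of matrices whose projective column space embeds isometrically into $\prjc{Z}$, and by Theorem~\ref{thm_dj}, $J_Z$ is the set of those whose projective column space is isometric to $\prjc{Z}$. Invoking the classification of isometric embeddings between closed convex subsets of $\hatr$: a closed convex set embeds isometrically into a closed interval of diameter $d$ exactly when it is empty, a singleton, or a closed interval of diameter at most $d$, and is isometric to it exactly when it is a closed interval of diameter exactly $d$. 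Hence $J\setminus J_Z$ is the set of matrices whose projective column space is empty, a singleton, or a closed interval of diameter strictly less than $d$; and since this last condition is precisely the condition for such a projective column space to embed isometrically into an open interval of diameter $d$, Theorem~\ref{thm_allideals} identifies this set with $I$.

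Now suppose $S(I)=(-\infty,\infty)$. This case is parallel, but it is the one place where care is needed: one must \emph{not} choose $Z$ with $\prjc{Z}=\hatr$, because the principal ideal it generates is all of $\mstwo$ and deleting the top $\mathcal{J}$-class would leave the matrices with half-infinite projective column space, which are not members of $I$. Instead I would take $Z$ with $\prjc{Z}$ a half-infinite closed interval, say $[-\infty,0]$. Arguing as before, $J$ consists of the matrices whose projective column space is empty, a singleton, a closed interval of finite diameter, or a half-infinite closed interval, while $J_Z$ consists of exactly the matrices whose projective column space is a half-infinite closed interval; so $J\setminus J_Z$ consists of the matrices whose projective column space is empty, a singleton, or a closed interval of finite diameter. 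Since these are precisely the closed convex sets that embed isometrically into $(-\infty,\infty)$, Theorem~\ref{thm_allideals} again identifies $J\setminus J_Z$ with $I$, completing the proof.

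I expect the only genuine subtlety to be the choice of generator in the unbounded case, where simply passing to the closure $\hatr$ of $(-\infty,\infty)$ would produce the wrong principal ideal; everything else is routine bookkeeping with the isometry types of subintervals of $\hatr$. For completeness one might also record that the smallest ideal — the one consisting of the zero matrix alone — is principal, so that no non-principal ideal is empty and the stated dichotomy genuinely exhausts all ideals.
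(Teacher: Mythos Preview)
Your argument is correct and follows the same route as the paper: reduce via Theorem~\ref{thm_allideals} and Corollary~\ref{cor_closedprincipalfg} to the case where $S(I)$ is open, then exhibit a principal ideal whose top $\mathcal{J}$-class, once removed, leaves exactly $I$. The paper does this uniformly by taking the smallest closed interval $J$ in $\hatr$ containing $S(I)$ and asserting that a closed convex set embeds isometrically in $S(I)$ if and only if it embeds in $J$ but is not isometric to $J$. As you explicitly flag, this assertion fails when $S(I)=(-\infty,\infty)$: the closure is $\hatr$, and half-infinite closed intervals embed in $\hatr$, are not isometric to $\hatr$, yet do not embed in $(-\infty,\infty)$. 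Your choice of a generator with half-infinite projective column space repairs this, so in fact your treatment of the unbounded case is more careful than the paper's own proof.
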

\begin{proof}
Let $I$ be an ideal and consider the convex set $S(I) \in \mathbb{S}$.
If $S(I)$ is
closed then by Corollary~\ref{cor_closedprincipalfg}, $I$ is principal.
Otherwise, $S(I)$ is an open interval. Let $J$ be the the smallest closed
interval in $\hatr$ containing $S(I)$. Clearly, a given closed interval
$K$ embeds isometrically into $S(I)$ if and only if it embeds isometrically
into $J$ but is not isometric to $J$. Hence, by Theorems~\ref{thm_jorder} and
\ref{thm_dj}, a matrix is in $I$ if and only if it lies in the ideal
corresponding to $J$ (which by Corollary~\ref{cor_closedprincipalfg} is
principal) but not in the $\mathcal{J}$-class corresponding to $J$.
\end{proof}


\section{Idempotents and Subgroups}\label{sec_idpt}

Our aim in this section is to identify the idempotent elements of
$\mstwo$, and draw some conclusions about both its
semigroup-theoretic structure and its maximal subgroups.
Recall that an element $e$ in a semigroup is called
\textit{idempotent} if $e^2 = e$.

\begin{proposition}\label{prop_idpt}
The idempotents of $\mstwo$ are exactly the matrices of the form
\[
\left( \begin{array}{c c} 0 & x  \\
y&x+y\\
\end{array}
\right), \;\; \left( \begin{array}{c c} 0 & x  \\
y&0\\
\end{array}
\right),\;\; \left( \begin{array}{c c} x+y & x  \\
y&0\\
\end{array}
\right) \;\; \mbox{and} \;\;
\left( \begin{array}{c c} -\infty & -\infty  \\
-\infty&-\infty\\
\end{array}
\right)
\]
where $x, y \in \bar{\mathbb{R}}$ with $x+y \leq 0$.
\end{proposition}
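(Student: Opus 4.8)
The plan is to attack this by brute-force case analysis on the $\mathcal{R}$-class (equivalently, the shape of the projective column space) of an idempotent $E$, using the classification in Figure~\ref{fig:rclass}, and then in each case to impose the equation $E^2 = E$ and solve. First I would dispose of the trivial cases: the zero matrix is visibly idempotent, and any idempotent lying in the $\mathcal{R}$-class $R_\emptyset$ must be the zero matrix. Similarly, an idempotent cannot lie in $R_{\hatr}$: the matrices there are monomial (units), and the only idempotent unit is the identity, which has $x = y = -\infty$ in none of the listed families unless we check directly --- actually the identity $\mathrm{diag}(0,0)$ is the second listed form with $x = y = -\infty$, so I would note that $R_{\hatr}$ contributes only the identity matrix and that it is already captured. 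So the substantive work is in the six remaining $\mathcal{R}$-classes.

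Next I would observe a structural constraint that cuts the work roughly in half: if $E$ is idempotent then $E \mathcal{R} E^2 = E$ forces nothing new, but $E = E^2$ means $C(E) = C(E^2) \subseteq C(E)$, and more usefully, an idempotent $E$ satisfies $E \mathcal{L} E$ and $E \mathcal{R} E$ trivially --- the real leverage is that for an idempotent the column space $C(E)$ must be fixed setwise and the matrix must act as a retraction. Concretely, I would use the fact from Lemma~\ref{lemma_rclasses} that each column of $E$ lies in $C(E)$, together with $E \cdot (\text{each column of } E) = \text{corresponding column of } E^2 = E$, i.e.\ every column of $E$ is a fixed vector of left-multiplication by $E$. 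Even more simply: just write out $E = \twomat{a}{b}{c}{d}$, compute $E^2$ entrywise via the tropical product (each entry of $E^2$ is a max of two sums), set $E^2 = E$, and get the four equations
\[
\max(2a, b+c) = a,\quad \max(a+b, b+d) = b,\quad \max(a+c, c+d) = c,\quad \max(b+c, 2d) = d.
\]
These four max-equations, analysed by which term achieves the maximum, are exactly the engine of the proof.

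From the first equation, $2a \leq a$ gives $a \leq 0$ (or $a = -\infty$), and $b + c \leq a$; symmetrically $d \leq 0$ (or $-\infty$) and $b + c \leq d$. I would then split on whether $b$ and $c$ are finite or $-\infty$. If $a$ is finite I can normalise: conjugating by a diagonal monomial matrix (equivalently, the idempotents come in similarity families) lets me assume $a = 0$; this is where the parameter $0$ in the stated forms comes from. With $a = 0$ the equations collapse to $b + c \leq 0$, $\max(b, b + d) = b$ (so $d \leq 0$ whenever $b$ finite), $\max(c, c+d) = c$, and $\max(b+c, 2d) = d$, and one reads off the three finite-type families $\twomat{0}{x}{y}{x+y}$, $\twomat{0}{x}{y}{0}$, $\twomat{x+y}{x}{y}{0}$ according to whether $d = x + y$, $d = 0$ (with $b+c \le 0$), or $a$ was the one forced to equal $x+y$ instead --- the three cases being symmetric under the transpose/anti-transpose symmetry of the problem, which I would invoke to avoid repeating the argument. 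The degenerate subcases where $b = -\infty$ or $c = -\infty$ (one off-diagonal entry vanishes) I'd check fall into these same families (e.g.\ $b = -\infty$ forces, after normalisation, the form $\twomat{0}{-\infty}{y}{d}$ with $d \le 0$, which is the second family with $x = -\infty$). The cases where $a = -\infty$ get handled analogously and land in the families with $x$ or $y$ equal to $-\infty$, or in the zero matrix.

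The main obstacle is purely bookkeeping: making sure the case split on "which of the six $\mathcal{R}$-classes / which entries are $-\infty$ / which term wins each max" is exhaustive and that the boundary cases ($x = -\infty$, $y = -\infty$, $x + y = 0$) are correctly absorbed into the four stated forms rather than producing spurious extra idempotents or missing some. I would manage this by exploiting the two evident symmetries --- transpose, and the anti-diagonal flip --- to reduce to essentially one generic finite case plus the zero matrix, and by using the constraint $x + y \le 0$ (which is forced by $2a \le a$ and $b + c \le a$ after normalisation) as the uniform characterisation. Verifying conversely that each listed matrix is genuinely idempotent is a one-line tropical computation in each family and I would just state it.
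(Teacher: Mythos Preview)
Your core approach --- writing $E = \twomat{a}{b}{c}{d}$, computing the four max-equations from $E^2 = E$, and case-splitting --- is exactly what the paper does, and the initial $\mathcal{R}$-class framing you sketch first is unnecessary overhead (the paper never uses it). There is, however, one genuine error in your plan: the claim that you can normalise to $a = 0$ by conjugating with a diagonal monomial matrix is false. In tropical arithmetic, conjugation by $D = \mathrm{diag}(\lambda,\mu)$ sends $\twomat{a}{b}{c}{d}$ to $\twomat{a}{b+\mu-\lambda}{c+\lambda-\mu}{d}$, so the diagonal entries are unchanged (just as classically). Nor can you scale: if $E^2 = E$ then $(\lambda \otimes E)^2 = (2\lambda) \otimes E \neq \lambda \otimes E$ unless $\lambda = 0$. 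The $0$ appearing in the listed forms is therefore not a normalisation artefact but is forced directly by the equations: from $\max(2a, b+c) = a$ you already have $a \leq 0$, and the paper simply splits into the two cases $a = 0$ and $a < 0$. In the latter case $a = b+c$ is forced, and one then reads off the third form or the zero matrix according as $d = 0$ or not; in the former case $b+c \leq 0$ and either $d = 0$ or $d = b+c$, giving the second and first forms. Replace your normalisation step with this dichotomy and the rest of your outline goes through; your transpose/anti-diagonal symmetry observation is valid and could indeed be used to shorten the bookkeeping, though the paper just does the brief case analysis by hand.
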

\begin{proof}
It is readily verified by direct computation that these matrices are
idempotent. Conversely, suppose that
\[
\left( \begin{array}{c c} a & b  \\
c&d\\
\end{array}
\right) \  \left( \begin{array}{c c} a & b  \\
c&d\\
\end{array}
\right) = \left( \begin{array}{c c} a & b  \\
c&d\\
\end{array}
\right).
\]
Then we have
\begin{eqnarray}
\max(a+a, b+c)= a,&&\;\;\max(b+c, d+d)= d,\label{diag}\\
\max(a+c, c+d)= c,&& \;\;\max(a+b, b+d)= b\label{offdiag},
\end{eqnarray}
giving $-\infty \leq a,d \leq 0$.

First suppose that $a < 0$. Then by \eqref{diag} we must have that
$a=b+c$. If $d = 0$ then we have a matrix of the form
$$\left( \begin{array}{c c} b+c & b  \\
c&0\\
\end{array}
\right),$$ where $-\infty \leq b+c = a < 0$. On the other hand, if $d \neq 0$
then by \eqref{diag} we must have the zero matrix.

Next suppose that $a=0$. By \eqref{diag} we see that $b+c \leq 0$
and either $d=0$ or $d=b+c$, giving matrices of the form
$$\left( \begin{array}{c c} 0 & b  \\
c&0\\
\end{array}
\right)\mbox{ and } \left( \begin{array}{c c} 0 & b  \\
c&b+c\\
\end{array}
\right)$$ where $-\infty \leq b+c \leq 0$, respectively.
\end{proof}

While the purely computational approach to finding idempotents employed
in the proof of Proposition~\ref{prop_idpt} is straightforward in the
$2 \times 2$ case, it is conceptually unenlightening and quickly becomes
intractable in higher dimensions.
In any semigroup of functions, the idempotents are exactly the
\textit{projections}, that is, those functions which fix their
images pointwise. In $\mstwo$, then, an idempotent element is a
matrix which (viewed as acting from the left on column vectors)
fixes the tropical convex cone generated by its own columns.
Figure~5 illustrates the geometric action of
some typical idempotents. In higher dimensions, the complex structure
of tropical convex cones \cite{develin} makes it a delicate
task to locate the idempotents by geometric arguments, but nevertheless we
believe that only this approach is feasible.

\begin{figure}[ht] 
\begin{center}\psfrag{a}{u}\psfrag{b}{$u\oplus v$ }\psfrag{c}{$v$}
\psfrag{d}{\hspace{-0.7cm}$\lambda \otimes v$}\psfrag{e}{$v$}
\subfigure[{A projection onto the column space corresponding to
$R_{\lbrace y
\rbrace}$.}]{\includegraphics[scale=0.5]{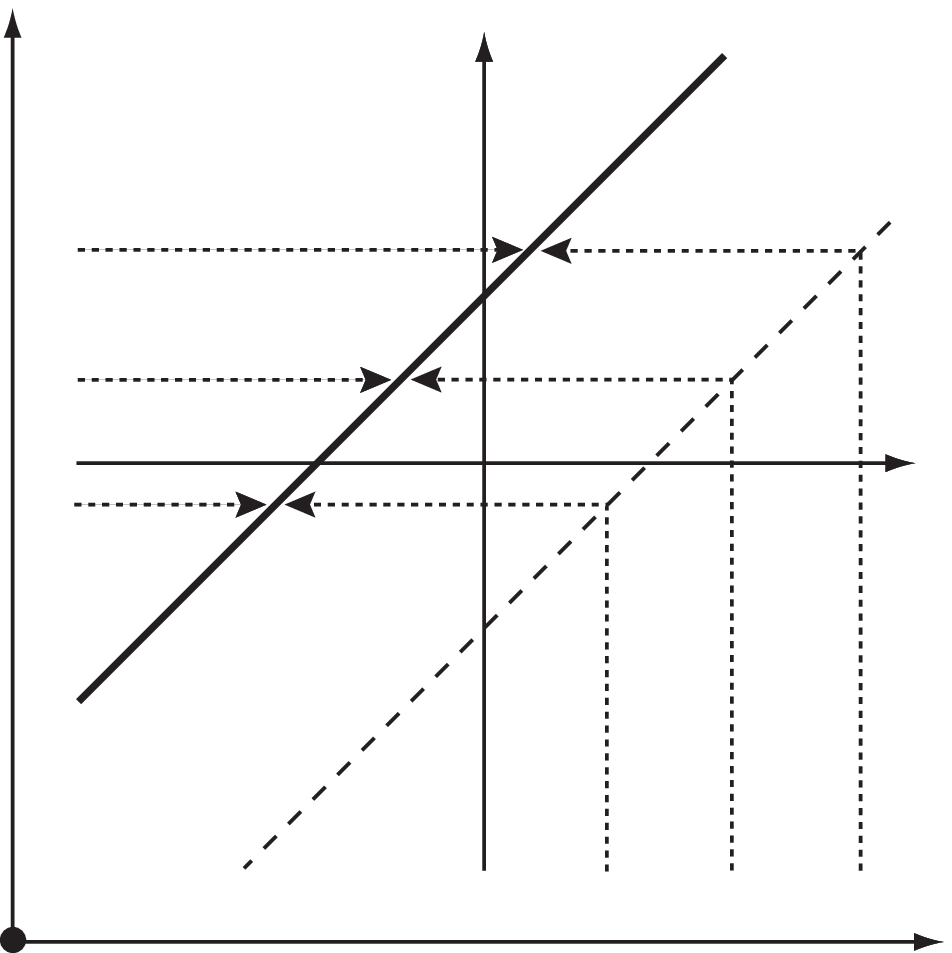}\label{line}}\hspace
{1cm}
\subfigure[{A projection onto the column space corresponding to
$R_{[x,y]}$.}]{\includegraphics[scale=0.5]{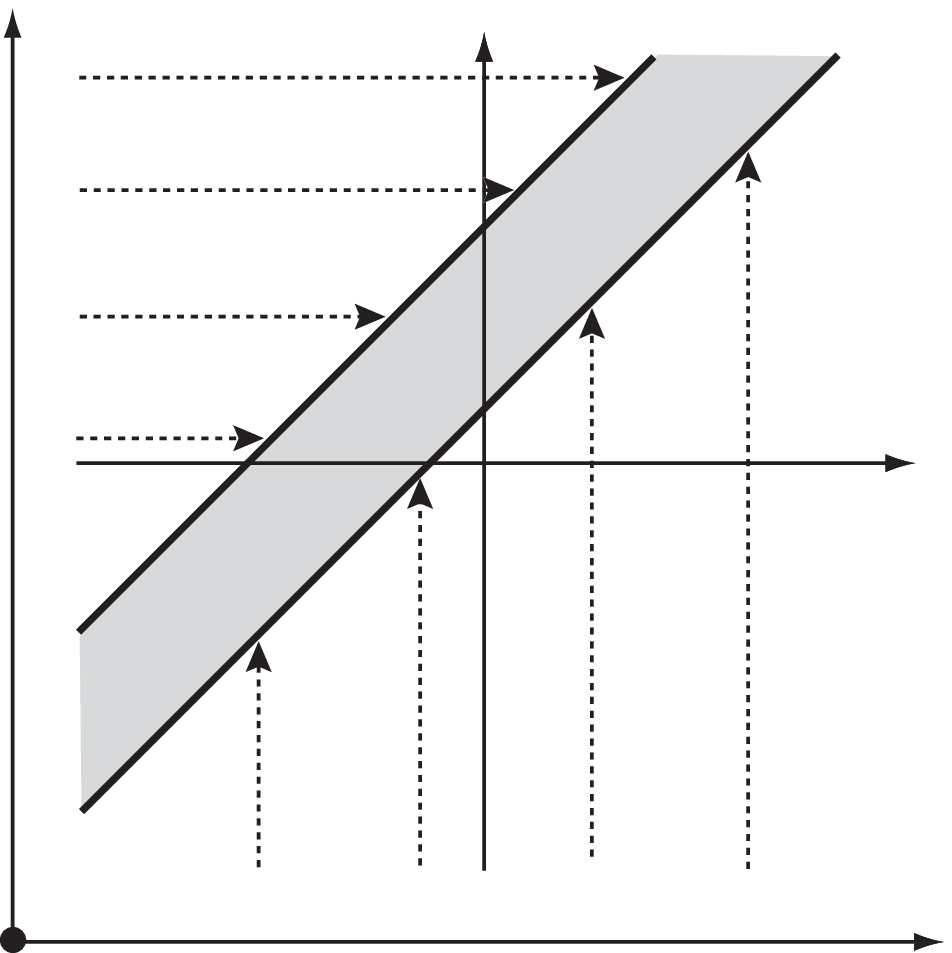}\label{strip}}
\caption{Examples of projections.}\label{projections}\end{center}
\begin{picture}(1,10)
\put(115,202){$y$} \put(95,154){$-x$}\put(283,209){$y$}
\put(283,184){$x$}
\end{picture}
\label{fig:projections}
\end{figure}

Cross-referencing Proposition~\ref{prop_idpt} with
Figure~\ref{fig:rclass}, we quickly see that $\mstwo$ has an idempotent
in every $\mathcal{R}$-class. Recall that a semigroup $S$ is called
\textit{regular} if for every element $X \in S$ there is an element
$Y \in S$ such that $XYX = X$ (\textit{von Neumann regularity} in
the terminology of ring theory). It is well known that a semigroup
is regular if and only if every $\mathcal{R}$-class contains an
idempotent, so we have established the following.

\begin{theorem}
The semigroup $\mstwo$ of all $2 \times 2$ tropical matrices is regular.
\end{theorem}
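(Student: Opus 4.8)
The plan is to invoke the standard semigroup-theoretic criterion that a semigroup is regular if and only if every $\mathcal{R}$-class contains an idempotent. This reduces the theorem to a finite-case bookkeeping exercise: for each of the eight isometry types of $\mathcal{R}$-classes listed in Figure~\ref{fig:rclass}, exhibit (or point to) an idempotent whose projective column space is the corresponding closed convex subset of $\hatr$, and then invoke Corollary~\ref{cor_rclasses} to conclude that this idempotent indeed lies in that $\mathcal{R}$-class. Since the $\mathcal{R}$-classes are in bijection with closed convex subsets of $\hatr$, and the idempotents are explicitly classified in Proposition~\ref{prop_idpt}, the whole argument is a cross-referencing check.

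Concretely, I would first recall the criterion and then run down the list. For $R_\emptyset$, the zero matrix (the last matrix in Proposition~\ref{prop_idpt}) works. For $R_{\{-\infty\}}$, take $x=-\infty$ and a real $y$ in the first matrix $\twomat{0}{x}{y}{x+y} = \twomat{0}{-\infty}{y}{-\infty}$, whose column space is $\{-\infty\}$. For $R_{\{y\}}$ with $y$ real, the same matrix with $x$ real and $x+y\le 0$ has projective column space $\{y\}$; for $R_{\{\infty\}}$ take $y=-\infty$, $x$ real. For the interval classes $R_{[-\infty,y]}$, $R_{[x,y]}$ and $R_{[y,\infty]}$, the first family $\twomat{0}{x}{y}{x+y}$ with suitable real (or infinite) choices of $x$ and $y$ subject to $x+y\le 0$ realises closed intervals with the correct endpoints (computing $c-a = y$ and $d-b = x$ in the notation preceding Figure~\ref{fig:rclass}), and the second and third families of idempotents cover the remaining endpoint configurations. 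Finally, for $R_{\hatr}$, the identity matrix $\twomat{0}{-\infty}{-\infty}{0}$ is idempotent with projective column space all of $\hatr$.

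There is essentially no obstacle here: the work was already done in Proposition~\ref{prop_idpt} and Figure~\ref{fig:rclass}, and the paper itself states that cross-referencing these "quickly" shows every $\mathcal{R}$-class contains an idempotent. The only point requiring a moment's care is checking that the constraint $x+y\le 0$ on the idempotent parameters does not obstruct realising any particular pair of endpoints — but since for a closed interval $[x,y]$ with $x\le y$ one is free to translate via the real parameters $a,b$ and the relevant idempotent families are parametrised precisely so that one of $x$, $y$ can be taken as small (negative) as needed, every closed interval is attained. I would present this as: state the regularity criterion, assert the cross-reference, give one or two representative computations to illustrate (the singleton case and one interval case), and conclude.

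\begin{proof}
It is a standard fact (see \cite{clifford}) that a semigroup is regular if and
only if each of its $\mathcal{R}$-classes contains an idempotent. By
Corollary~\ref{cor_rclasses}, the $\mathcal{R}$-class of a matrix $A \in \mstwo$
is determined by $\prjc{A}$, and by the discussion accompanying
Figure~\ref{fig:rclass} these $\mathcal{R}$-classes are precisely $R_M$ as $M$
ranges over the closed convex subsets of $\hatr$. It therefore suffices to
check, using Proposition~\ref{prop_idpt}, that every such $M$ arises as
$\prjc{E}$ for some idempotent $E$.

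For $M = \emptyset$ the zero matrix suffices, and for $M = \hatr$ the identity
matrix $\twomat{0}{-\infty}{-\infty}{0}$ is idempotent with $\prjc{E} = \hatr$.
For a singleton $M = \lbrace z \rbrace$ with $z$ real, the matrix
$\twomat{0}{x}{y}{x+y}$ with $y = z$ and $x \in \mathbb{R}$ chosen so that
$x + y \le 0$ is idempotent (Proposition~\ref{prop_idpt}) and, computing as
before Figure~\ref{fig:rclass}, has projective column space
$\lbrace (x+y)-x \rbrace = \lbrace y \rbrace = \lbrace z \rbrace$; taking
instead $x = -\infty$ gives $M = \lbrace -\infty \rbrace$ and $y = -\infty$
gives $M = \lbrace \infty \rbrace$. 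For a closed interval $M = [x,y]$ with
real endpoints, the idempotent $\twomat{0}{x}{y}{x+y}$ again has
$\prjc{E} = [x,y]$ when $x + y \le 0$; when $x + y > 0$ one uses instead the
idempotent $\twomat{x+y}{x}{y}{0}$ from Proposition~\ref{prop_idpt}, whose
projective column space is likewise $[x,y]$. Finally, the half-infinite
intervals $[-\infty, y]$ and $[y, \infty]$ with $y$ real are obtained by
allowing the appropriate entry above to equal $-\infty$ in these same
families. Thus every $\mathcal{R}$-class of $\mstwo$ contains an idempotent,
and the semigroup is regular.
\end{proof}
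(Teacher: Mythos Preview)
Your strategy is exactly the paper's: cross-reference Proposition~\ref{prop_idpt} with Figure~\ref{fig:rclass} and invoke the standard regularity criterion. But the case analysis is miscomputed. The idempotent $\twomat{0}{x}{y}{x+y}$ has projective column space $\{y\}$, not $[x,y]$: both columns project to $y$, since for the second column $(x+y)-x = y$. Likewise $\twomat{x+y}{x}{y}{0}$ has projective column space $\{-x\}$. These two families furnish only the \emph{singleton} $\mathcal{R}$-classes. All of the genuine interval classes $R_{[x,y]}$, $R_{[-\infty,y]}$, $R_{[y,\infty]}$, $R_{\hatr}$ come from the \emph{middle} family $\twomat{0}{x'}{y'}{0}$, whose column projections are $y'$ and $-x'$. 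Separately, your proposed idempotent for the case $x+y > 0$ is not idempotent at all: Proposition~\ref{prop_idpt} requires $x+y \le 0$, and a direct check shows $\twomat{x+y}{x}{y}{0}^2 \ne \twomat{x+y}{x}{y}{0}$ when $x+y>0$ (the $(1,1)$-entry of the square is $2(x+y)$).

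You also have the infinite singletons swapped: taking $y = -\infty$ (not $x = -\infty$) in the first family yields $M = \{-\infty\}$, while $M = \{\infty\}$ cannot come from the first family at all, since its $(1,1)$-entry is always $0$; one needs the third family with $x = -\infty$. These are bookkeeping slips rather than a failure of method, but as written several of the asserted projective column spaces are incorrect and the proof does not stand.
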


We now turn our attention to maximal subgroups of $\mstwo$.
It is a foundational result of semigroup theory (see for example \cite{clifford})
that every subgroup of a semigroup lies in a unique maximal subgroup, and that
the maximal subgroups are exactly the $\mathcal{H}$-classes of idempotent
elements. We thus begin by describing those $\mathcal{H}$-classes which
contain idempotents.

\begin{theorem}\label{thm_idpthclasses}
Let $M$ and $N$ be closed convex subsets of $\hatr$. Then the $\mathcal{H}$-class $R_M \cap
L_N$ contains an idempotent if and only if one of the
following conditions holds:
\begin{itemize}
\item[(i)] $M=\{x\}$ and $N=\{y\}$ with $\lbrace x,y \rbrace \neq \lbrace -\infty,\infty \rbrace$;
\item[(ii)] $M = -N = \lbrace -x \mid x \in N \rbrace$ where $|N| \neq1$.
\end{itemize}
\end{theorem}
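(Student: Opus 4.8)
The plan is to combine the explicit list of idempotents from Proposition~\ref{prop_idpt} with Corollary~\ref{cor_rclasses} and Proposition~\ref{prop_duality}, computing directly the projective column and row spaces of each idempotent and then checking conversely that every $\mathcal{H}$-class satisfying (i) or (ii) meets the list. First I would observe that, since maximal subgroups are $\mathcal{H}$-classes of idempotents and each $\mathcal{H}$-class is the intersection $R_M \cap L_N$ of an $\mathcal{R}$-class and an $\mathcal{L}$-class, it suffices to determine, for each idempotent $E$ in Proposition~\ref{prop_idpt}, the pair $(\prjc{E}, \prjr{E})$; by Corollary~\ref{cor_rclasses} this pair pins down the $\mathcal{H}$-class containing $E$, and the idempotent-containing $\mathcal{H}$-classes are exactly those arising in this way.

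Next I would run through the three nontrivial families in Proposition~\ref{prop_idpt} (the zero matrix is trivial, giving $M=N=\emptyset$, which is subsumed by case~(i) read appropriately — or can be mentioned separately). For $E = \twomat{0}{x}{y}{x+y}$ with $x+y\le 0$: its two columns are scalar multiples of each other (the second is the first shifted), so $\prjc{E}$ is a singleton; reading off Figure~\ref{fig:rclass}, $\prjc{E} = \{y\}$, and a transpose computation (or Proposition~\ref{prop_duality}) gives $\prjr{E} = \{x\}$. The constraint $x+y \le 0$ then says precisely $x,y \in \hatr$ with $y \le -x$, i.e.\ this family realizes exactly those singleton pairs $(\{x\},\{y\})$ with... — actually it realizes all pairs with $x+y \le 0$; the symmetric families $\twomat{x+y}{x}{y}{0}$ give $x+y \le 0$ on the other side, and together with $\twomat{0}{x}{y}{0}$ (where $x+y\le 0$ too, now with genuine interval column space $\prjc{E} = [y, x+y]$ oops — I must recompute: for $\twomat{0}{x}{y}{0}$ the column space endpoints are $y-0 = y$ and $0-x = -x$, so $\prjc{E} = [\min(y,-x), \max(y,-x)]$ and $\prjr{E} = [\min(x,-y),\max(x,-y)] = -\prjc{E}$), one recovers both (i) and (ii). So the bookkeeping is: the ``degenerate'' idempotents (columns proportional) give all singleton pairs $\{x\},\{y\}$, and one checks these are exactly the pairs with $\{x,y\}\ne\{-\infty,\infty\}$ by seeing which $(x,y)$ with $x+y\le 0$ occur across all three families and noting $(-\infty,\infty)$ and $(\infty,-\infty)$ cannot since $x+y\le 0$ fails for one orientation and the relevant matrix entry becomes $-\infty$ wrongly; the non-degenerate idempotents $\twomat{0}{x}{y}{0}$ give exactly the pairs $(M,-M)$ with $|M|\ne 1$.

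The main obstacle will be the careful casework at the boundary: tracking when entries equal $-\infty$, making sure the ``extended subtraction'' conventions from Section~\ref{sec_prelim} are applied correctly when locating each idempotent in Figure~\ref{fig:rclass}, and verifying the exact exclusion $\{x,y\}\ne\{-\infty,\infty\}$ in~(i) — one must show both that no idempotent has $(\prjc{E},\prjr{E}) = (\{-\infty\},\{\infty\})$ or $(\{\infty\},\{-\infty\})$, and that every other singleton pair is hit. I would handle this by noting that an idempotent with one-dimensional projective column space lies in $R_{\{y\}}$ and has a nonzero row, and then that case (i) of Theorem~\ref{thm_idpthclasses} corresponds to the idempotent $\twomat{0}{x}{y}{x+y}$ or a transpose/variant with $x+y\le 0$, whence reading $x,y$ off shows the forbidden pair would force $x+y = \infty \not\le 0$. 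For the converse direction — that every $\mathcal{H}$-class of type (i) or (ii) actually contains an idempotent — I simply exhibit the idempotent from Proposition~\ref{prop_idpt} with the matching parameters (which is legitimate precisely because $x+y \le 0$ is satisfiable for every admissible pair), and invoke Corollary~\ref{cor_rclasses} to conclude it lies in $R_M \cap L_N$. Finally, I would note that since by Proposition~\ref{prop_duality} every $\mathcal{H}$-class $R_M\cap L_N$ with $M,N$ closed convex must have $M \cong N$, the only candidate $\mathcal{H}$-classes are those with $M,N$ both singletons or $M \cong N$ of equal positive diameter, so cases (i) and (ii) are indeed the only possibilities to check, streamlining the analysis.
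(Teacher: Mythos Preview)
Your approach is essentially the paper's own: run through the idempotents of Proposition~\ref{prop_idpt}, compute $\prjc{E}$ and $\prjr{E}$ for each to obtain the forward direction, then for the converse exhibit an explicit idempotent with the required projective column and row spaces. Two small corrections to your bookkeeping. First, the zero matrix gives $M=N=\emptyset$, which falls under case~(ii) (since $|N|=0\ne 1$ and $-\emptyset=\emptyset$), not case~(i); the empty set is not a singleton. Second, for the converse of~(i) you must split on the sign of $x+y$: when $x+y\le 0$ the matrix $\twomat{0}{y}{x}{x+y}$ works, while when $x+y\ge 0$ you instead use $\twomat{-x-y}{-y}{-x}{0}$ from the third family in Proposition~\ref{prop_idpt}; no single family covers all admissible singleton pairs. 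Finally, the pair $\{-\infty,\infty\}$ is excluded not because ``$x+y=\infty\not\le 0$'' but simply because realising it would require a matrix entry equal to $+\infty\notin\barr$.
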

\begin{proof}
Suppose first that $R_M \cap L_N$ contains an idempotent $E$. Then
$E$ must have one of the four forms given by
Proposition~\ref{prop_idpt}. Clearly if $E$ is the zero matrix then $M =
N = \emptyset$ and (ii) holds. If $E$ has the form
$\twomat{0}{x}{y}{x+y}$ for $x, y \in \barr$ with $x+y \leq 0$, then
it is readily verified that $PC(E) = \lbrace y \rbrace$ and $PR(E) =
\lbrace x \rbrace$ and hence (i) holds. An entirely similar argument
holds if $E$ has the form $\twomat{x+y}{x}{y}{0}$, where this time
$PC(E) = \lbrace -x \rbrace$ and $PR(E) = \lbrace -y \rbrace$.
Finally, if $E$ has the form $\twomat{0}{x}{y}{0}$ with $x+y \leq 0$
then a simple computation shows that $PC(E) = [y,-x]$ and $PR(E) =
[x,-y]$ so that once again (ii) holds.

Conversely, suppose (i) holds, say $M = \lbrace x \rbrace$ and $N =
\lbrace y \rbrace$, where $\lbrace x, y \rbrace \neq \lbrace -\infty, \infty \rbrace$
so that $x+y$ is well-defined. If $x+y \leq 0$
then $x,y \neq \infty$ and the matrix $\twomat{0}{y}{x}{x+y}$ is an
idempotent by Proposition~\ref{prop_idpt}, and is easily seen (by
computing the projective row and column spaces) to lie in the
claimed $\mathcal{H}$-class. On the other hand, if $x+y \geq 0$ then
$x,y \neq -\infty$, so we have $-x, -y \in \barr$ and $(-x) + (-y)
\leq 0$. It follows by Proposition~\ref{prop_idpt} that the matrix
$\twomat{-x-y}{-y}{-x}{0}$ is idempotent and once again it is easily
verified that it lies in $R_M \cap L_N$.

Finally, suppose (ii) holds. If $M$ is empty then so is $N$, and the
zero matrix is an idempotent in $R_M \cap L_N$. Suppose, then, that
$M$ is a closed interval $[x,y]$ with $x,y \in \hatr$ and $x \leq
y$. Then $y \neq -\infty$ so $-y$ is well-defined, and $x+(-y) < 0$.
Hence, by Proposition~\ref{prop_idpt}, the
matrix $\twomat{0}{-y}{x}{0}$ is idempotent. Once more, it is
straightforward to verify that this matrix lies in $R_M \cap L_N$.
\end{proof}

Having ascertained which $\mathcal{H}$-classes are maximal subgroups, it
remains to identify the algebraic structure of each.

\begin{theorem}
The maximal subgroups in the $\mathcal{D}$-class of elements with
row and column space isometric to a closed convex subset $M
\subseteq \hatr$ are isomorphic to:
\begin{itemize}
\item[(i)] the trivial group, if $M = \emptyset$;
\item[(ii)] the additive group $\mathbb{R}$ of real numbers, if $M$ is a
point, or an interval with precisely one real endpoint;
\item[(iii)] the direct product $\mathbb{R} \times S_2$, if $M$ is an interval
with two real endpoints;
\item[(iv)] the wreath product $\mathbb{R} \wr S_2$, if $M = \hatr$.
\end{itemize}
\end{theorem}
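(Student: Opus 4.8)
The plan is to invoke two standard facts about semigroups (see \cite{clifford}): the maximal subgroups are exactly the $\mathcal{H}$-classes that contain an idempotent, and any two maximal subgroups lying in a common $\mathcal{D}$-class are isomorphic as groups. By Theorem~\ref{thm_dj} the $\mathcal{D}$-classes of $\mstwo$ correspond bijectively to the isometry types of closed convex subsets of $\hatr$, and by the classification recalled just before Proposition~\ref{prop_duality} there are precisely five such types: the empty set, a singleton, a closed interval of finite diameter, a closed interval with exactly one real endpoint, and $\hatr$ itself. Each of these $\mathcal{D}$-classes contains an idempotent (exhibited either by Proposition~\ref{prop_idpt} or by the constructions in the proof of Theorem~\ref{thm_idpthclasses}), so it is enough, for each type, to choose one idempotent $E$, write down its $\mathcal{H}$-class $R_{\prjc{E}} \cap L_{\prjr{E}}$ explicitly using Figure~\ref{fig:rclass}, and then read off the induced group multiplication.

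Three of the five cases are quick. When the type is $\emptyset$ the whole $\mathcal{D}$-class is $\{0\}$, the zero matrix being idempotent, so the group is trivial. For a singleton I would take $E = \twomat{0}{0}{0}{0}$, whose projective row and column spaces are both $\{0\}$; Figure~\ref{fig:rclass} gives $R_{\{0\}} \cap L_{\{0\}} = \{\twomat{a}{a}{a}{a} : a \in \mathbb{R}\}$, which multiplies like $(\mathbb{R},+)$. For an interval with exactly one real endpoint, say $[0,\infty]$, the idempotent $E = \twomat{0}{-\infty}{0}{0}$ has $\prjc{E} = [0,\infty]$ and $\prjr{E} = [-\infty,0]$; intersecting the corresponding classes from Figure~\ref{fig:rclass} again produces a one-parameter family isomorphic to $(\mathbb{R},+)$. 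For $M = \hatr$ the relevant $\mathcal{H}$-class $R_{\hatr} \cap L_{\hatr}$ is, since $R_{\hatr}$ consists exactly of the monomial matrices and is transpose-closed, nothing but the group of units of $\mstwo$, which is recorded in Section~\ref{sec_prelim} to be isomorphic to $\mathbb{R} \wr S_2$; alternatively it is the $\mathcal{H}$-class of the identity matrix.

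The one case needing genuine computation is that of a closed interval of finite diameter $d > 0$. Here I would take $M = [0,d]$ and $E = \twomat{0}{-d}{0}{0}$, with $\prjc{E} = [0,d]$ and $\prjr{E} = [-d,0]$. Intersecting $R_{[0,d]}$ with $L_{[-d,0]}$ as described by Figure~\ref{fig:rclass} gives a set with two ``strands'', $\{P_a : a \in \mathbb{R}\} \cup \{Q_a : a \in \mathbb{R}\}$, where $P_a = \twomat{a}{a-d}{a}{a}$ and $Q_a = \twomat{a}{a}{a+d}{a}$, and a short direct calculation yields the multiplication rules $P_aP_b = P_{a+b}$, $P_aQ_b = Q_aP_b = Q_{a+b}$ and $Q_aQ_b = P_{a+b+d}$. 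From these the group is seen to be abelian, with $\{P_a\} \cong (\mathbb{R},+)$ an index-two subgroup and $Q_{-d/2}$ an element of order two generating a complement that meets $\{P_a\}$ trivially; hence the group splits as an internal direct product of $(\mathbb{R},+)$ with a group of order two, that is, it is isomorphic to $\mathbb{R} \times S_2$.

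The only real point of care — and it is minor — is this last computation: one must handle the tropical products carefully to obtain the multiplication rules, and the salient observation is that, in contrast with the full unit group of case~(iv), the two strands here commute, so one obtains the direct product $\mathbb{R}\times S_2$ rather than the wreath product. Everything else is bookkeeping with Figure~\ref{fig:rclass} and the elementary theory of Green's relations.
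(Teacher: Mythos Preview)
Your proposal is correct and follows essentially the same approach as the paper's proof: for each isometry type of closed convex subset of $\hatr$, choose a convenient idempotent representative, compute its $\mathcal{H}$-class explicitly from Figure~\ref{fig:rclass}, and read off the group structure from the resulting multiplication rules. The only differences are cosmetic choices of representative (you take $M=\{0\}$, $[0,d]$, $[0,\infty]$ where the paper takes $\{-\infty\}$, a general $[x,y]$, $[x,\infty]$), and your verification of the $\mathbb{R}\times S_2$ case via the explicit matrices $P_a$, $Q_a$ is exactly parallel to the paper's computation with its matrices $X_a$, $Y_a$.
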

\begin{proof}
If $M = \emptyset$ then the only matrix in $R_M \cap L_M$ is the
zero matrix, so this $\mathcal{H}$-class is isomorphic to the
trivial group.

Now suppose $M = \lbrace x \rbrace$ is a singleton. Since maximal
subgroups in a $\mathcal{D}$-class are always isomorphic, by
Theorem~\ref{thm_dj} it suffices to consider the case that $M =
\lbrace -\infty \rbrace$. By Theorem~\ref{thm_idpthclasses}, $R_M
\cap L_M$ contains an idempotent. Reference to
Figure~\ref{fig:rclass} shows that
$$R_M \cap L_M = \lbrace W_a \mid a \in \mathbb{R} \rbrace$$
where
$$W_a = \twomat{a}{-\infty}{-\infty}{-\infty}.$$
Direct calculation shows that $W_a W_b = W_{a+b}$ for all $a, b \in
\mathbb{R}$ so that $R_M \cap L_M$ is isomorphic to the additive
group $\mathbb{R}$ as required.

Next suppose $M = [x,y]$ is an interval with distinct real
endpoints, so that $x-y < 0$. Then by Theorem~\ref{thm_idpthclasses}, setting $N = -M =
[-y,-x]$ we have that the $\mathcal{H}$-class $R_M \cap L_N$
contains an idempotent. A direct computation using Figure~\ref{fig:rclass} shows that
$$R_M \cap L_N = \lbrace X_a, Y_a \mid a \in \mathbb{R} \rbrace$$
where
$$X_a = \twomat{a}{a-y}{a+x}{a} \text{ and } Y_a = \twomat{a}{a-x}{a+y}{a}.$$
Simple calculation, recalling the fact that $x-y < 0$, shows that $X_a X_b = X_{a+b}$, $X_a Y_b = Y_b
X_a = Y_{a+b}$ and $Y_a Y_b = X_{a+b+(y-x)}$ for all $a, b \in
\mathbb{R}$. We deduce that $X_0$ is idempotent and hence is the
identity of $R_M \cap L_N$ and that the $X_a$'s form a central
subgroup isomorphic to the real numbers.
Moreover, choosing $z = (x-y)/2$ we see that
$(Y_z)^2 = X_0$ and every element $Y_b$ can be written in the form
$Y_z X_a$ for some $a \in \mathbb{R}$.
We have shown that $R_M \cap L_N$ is the product of commuting
subgroups with trivial intersection, one of them isomorphic to
$\mathbb{R}$ and the other to $S_2$. It follows that the subgroup is
isomorphic to $\mathbb{R} \times S_2$, as claimed.

Now suppose $M$ is an interval with one real and one infinite
endpoint. By Theorem~\ref{thm_dj} we may assume that $M = [x,
\infty]$. Set $N = -M = [-\infty, -x]$. Then by
Theorem~\ref{thm_idpthclasses} we have that $R_M \cap L_N$ contains
an idempotent. Another reference to
Figure~\ref{fig:rclass} reveals that
$$R_M \cap L_N = \lbrace Z_a \mid a \in \mathbb{R} \rbrace$$
where
$$Z_a = \twomat{a}{-\infty}{a+x}{a}.$$
Once again, we find that $Z_a Z_b = Z_{a+b}$ so that $R_M \cap L_N$
is isomorphic to the additive group $\mathbb{R}$

Finally, if $M = \hatr$ then we also have $N = \hatr$, and $R_M \cap L_N$
is the group of units. We remarked in Section~\ref{sec_prelim} that it is
known that the group of units of $\msn$ is isomorphic to the permutation
group wreath product $\mathbb{R} \wr (S_n, \lbrace 1, \dots, n \rbrace)$. In
the case $n=2$, since the right translation action of $S_2$ on itself is
isomorphic to its standard action on $\lbrace 1, 2 \rbrace$, the group of units is
also isomorphic to the wreath product $\mathbb{R} \wr S_2$ of abstract groups.
\end{proof}

We remarked in Section~\ref{sec_prelim} that it is known that every group
admitting a faithful representation by finite dimensional tropical matrices
has an abelian subgroup of finite index \cite{dalessandro}. In the case of
groups admitting faithful $2 \times 2$ tropical matrix representations, we
can now be rather more precise.
\begin{corollary}
Every group admitting a faithful representation by $2 \times 2$
tropical matrices is either torsion-free abelian or has a torsion-free
abelian subgroup of index $2$.
\end{corollary}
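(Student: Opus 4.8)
The plan is to reduce the statement to the classification of maximal subgroups of $\mstwo$ obtained above. First I would invoke the foundational fact recalled in the previous section: every subgroup of a semigroup is contained in a unique maximal subgroup, and the maximal subgroups are precisely the $\mathcal{H}$-classes of idempotents. Thus if $G$ admits a faithful representation by $2\times 2$ tropical matrices, then $G$ is isomorphic to a subgroup of the multiplicative monoid $\mstwo$, and hence to a subgroup of one of the maximal subgroups listed in the preceding theorem, namely the trivial group, the additive group $\mathbb{R}$, the direct product $\mathbb{R}\times S_2$, or the wreath product $\mathbb{R}\wr S_2$.

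It then suffices to check that every subgroup of each of these four groups is either torsion-free abelian or has a torsion-free abelian subgroup of index $2$. For the trivial group and for $\mathbb{R}$ this is immediate, since every subgroup of $\mathbb{R}$ is torsion-free abelian. For $\mathbb{R}\times S_2$ and for $\mathbb{R}\wr S_2 = (\mathbb{R}\times\mathbb{R})\rtimes S_2$ I would use the natural projection $\pi$ onto the $S_2$-factor, whose kernel $K$ (which is $\mathbb{R}$, respectively $\mathbb{R}\times\mathbb{R}$) is torsion-free abelian. Given a subgroup $G$ of such a group, the image $\pi(G)$ is either trivial, in which case $G \leq K$ is itself torsion-free abelian, or is the whole of $S_2$, in which case $G \cap K$ is a subgroup of $K$ (so torsion-free abelian) of index $2$ in $G$. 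This exhausts all cases and gives the corollary.

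No genuine obstacle arises here; the argument is essentially a bookkeeping exercise on top of the classification already in hand. The only points that require a moment's care are formal: one should note that the trivial group counts as torsion-free abelian, so that a cyclic group of order $2$ — which does embed in $\mstwo$ via the $S_2$-factor of $\mathbb{R}\times S_2$ — still satisfies the statement, its index-$2$ torsion-free abelian subgroup being the trivial one; and one should observe that the disjunction in the statement is inclusive, so no uniqueness or mutual exclusivity is being asserted. Assembling these remarks with the structure theorem for the maximal subgroups of $\mstwo$ completes the proof.
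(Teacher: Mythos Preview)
Your argument is correct and is precisely the intended one: the paper states the corollary without proof, as an immediate consequence of the classification of maximal subgroups, and your reduction via the projection onto the $S_2$-factor is exactly the way to fill in the details. Nothing further is needed.
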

In general, we conjecture that a group admitting a faithful representation
by $n \times n$ tropical matrices must have a torsion free abelian subgroup
of index at most $n!$.

\section*{Acknowledgements}

The authors thank the participants of the
\textit{Manchester Tropical Mathematics Reading Group}, and the organisers,
speakers and participants of the \textit{First de Br\'un Workshop on Computational Algebra}
(held at the National University of Ireland, Galway in 2008) for their
help with learning the background required for this study.
They also thank Claas R\"over for some helpful conversations, and
Gemma Lloyd for her assistance with the diagrams in this paper.

\end{document}